\newtheorem{theorem}{Theorem}
\newtheorem{lemma}{Lemma}
\newtheorem{proposition}{Proposition}
\theoremstyle{definition}
\newtheorem{definition}{Definition}
\newtheorem{remark}{Remark}
\theoremstyle{plain}
\newtheorem{corollary}{Corollary}
\newtheorem*{dCW}{Theorem}
\newtheorem*{alexander}{Theorem}
\newcommand{\vt}{\vspace{.1cm}}
\newcommand{\R}{\mathbb{R} }
\newcommand{\q}{\mathbb{Q} }
\newcommand{\N}{\mathbb{N} }
\newcommand{\h}{\mathbb{H} }
\newcommand{\s}{\mathbb{S}}
\newcommand{\ha}{\mathscr{H} }
\newcommand{\har}{\ha^n\times\R}
\newcommand{\g}{{\rm grad}\,}
\renewcommand{\rho}{\varrho}
\renewcommand{\Lambda}{\varLambda}
\renewcommand{\Omega}{\varOmega}
\renewcommand{\theta}{\varTheta}
\newcommand{\overbar}[1]{\mkern 1.5mu\overline{\mkern-1.5mu#1\mkern-1.5mu}\mkern 1.5mu}
\newcommand{\mr}{\overbar M^n\times\R}
\begin{document}

\title[Embeddedness, Convexity,  and Rigidity of Hypersurfaces]
{Embeddedness, Convexity,  and Rigidity of  Hypersurfaces  in Product Spaces}
\author{Ronaldo Freire  de Lima}
\address{Departamento de Matem\'atica -- UFRN \\
Lagoa Nova -- 59.072-970 -- Brasil.}
\email{ronaldo@ccet.ufrn.br}
\subjclass[2010]{53B25 (primary), 53C24,  53C42 (secondary).}
\keywords{hypersurface -- embeddedness -- convexity -- rigidity.}
\maketitle

\begin{abstract}
We establish the following Hadamard--Stoker type theorem:
Let  $f:M^n\rightarrow\har$ be a complete connected hypersurface
with positive definite second fundamental form,
where \,$\ha^n$ is  a Hadamard manifold.
If the height function of $f$ has a critical point, then it is
an embedding and $M$ is homeomorphic to $\s^n$ or  $\R^n.$
Furthermore, \,$f(M)$\,  bounds a convex set in \,$\har.$\,
In addition, it is  shown that, except for the assumption on convexity,  this result
is valid for hypersurfaces in
\,$\s^n\times\R$\, as well.  We apply these theorems
to show that a compact connected hypersurface in \,$\q_\epsilon^n\times\R$\, ($\epsilon=\pm 1$)
is a rotational sphere, provided it has either
constant mean curvature and positive-definite
second fundamental form or constant sectional curvature greater than $(\epsilon +1)/2.$
We also prove that, for \,$\overbar M=\ha^n \,{\rm or} \,\, \s^n,$\,
any connected proper hypersurface \,$f:M^n\rightarrow\overbar M^n \times\R$\, with positive
semi-definite second fundamental form
and height function with no critical points is
embedded and isometric to \,$\Sigma^{n-1}\times\R,$\, where
\,$\Sigma^{n-1}\subset\overbar M^n$\, is convex and homeomorphic to \,$\s^{n-1}$\,
(for \,$\overbar M^n=\ha^n$\, we assume further that \,$f$\, is cylindrically bounded).
Analogous theorems for hypersurfaces in  warped product spaces \,$\R\times_\rho \ha^n$\, and \,$\R\times_\rho\s^n$\,
are obtained.
In all of these results, the manifold \,$M^n$  is assumed to have dimension \,$n\ge 3.$
\end{abstract}

\section{Introduction}

In what concerns embeddedness and convexity of surfaces in Euclidean space,
one of the most fundamental results is the so-called Hadamard--Stoker Theorem.
It states that a complete and positively curved surface \,$S$\, immersed
in Euclidean space is embedded, bounds an open convex set, and is homeomorphic to
a sphere or a plane.  J. Hadamard \cite{hadamard}  proved it, partially, assuming \,$S$\, compact.
Subsequently, J. Stoker \cite{stoker} established the complete case and showed that
\,$S,$\, if  noncompact,  is  a graph over a planar domain.

In this context, another classical result is the celebrated
Cohn-Vossen Rigidity Theorem, according to which a compact  surface of
positive curvature in Euclidean space is rigid, that is, unique up
to Euclidean rigid motions.
(More generally, an isometric immersion
\,$f:M^n\rightarrow\overbar{M}^{n+p}$\, is called \emph{rigid} if,
for any other isometric immersion \,$g:M^n\rightarrow\overbar{M}^{n+p},$\, there is an ambient isometry
\,$\Phi:\overbar{M}\rightarrow\overbar{M}$\, such that \,$g=\Phi\circ f.$\, If so, \,$f$\, and
\,$g$\, are said to be \emph{congruent}.)

In \cite{sacksteder1, sacksteder2}, R. Sacksteder extended both
the Hadamard--Stoker Theorem and the Cohn-Vossen Rigidity Theorem to nonflat hypersurfaces
\,$f:M^n\rightarrow\R^{n+1}$\, with \emph{nonnegative} sectional curvature. Motivated by these results,
M. do Carmo and F. Warner \cite{docarmo-warner} considered hypersurfaces in spherical and
hyperbolic space forms, obtaining then the following theorem.

\begin{dCW}[do Carmo\,--Warner \cite{docarmo-warner}]
Let  $f:M^n\rightarrow\s^{n+1}$ ($n\ge 2$) be a non-totally geodesic hypersurface,
where $M$ is a compact,  connected, and orientable
Riemannian manifold with sectional curvature $K\ge 1.$
Then, the following hold:
\vt
\begin{itemize}[parsep=1ex]
   \item[{\rm a)}] $f$ is an embedding, and  $M$ is homeomorphic  to \,$\s^n.$

   \item[{\rm b)}] $f(M)$  bounds a closed convex set contained in an open hemisphere of \,$\s^{n+1}.$

   \item[{\rm c)}] $f$ is rigid.
\end{itemize}
\vt
\noindent
Moreover, the assertion {\rm (a)} and the convexity property in {\rm (b)}
still hold if one replaces the sphere $\s^{n+1}$ by the hyperbolic space  \,$\h^{n+1},$ and assume that \,$K\ge -1.$
\end{dCW}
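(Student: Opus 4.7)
The plan is to attack the three conclusions by first extracting convexity of the second fundamental form from the curvature hypothesis via the Gauss equation, then reducing (a) and the convex-body claim in (b) to Sacksteder's Euclidean Hadamard--Stoker-type theorem through a projection that sends geodesics to straight lines, and finally obtaining rigidity from a Beez--Killing-type result.

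For the initial algebraic step, the Gauss equation in a space form of curvature $c\in\{-1,1\}$ reads $K_M(X\wedge Y) = c + \langle AX,X\rangle\langle AY,Y\rangle - \langle AX,Y\rangle^{2}$ for orthonormal $X,Y$, so the hypothesis $K_M\ge c$ is equivalent to the shape operator $A$ being semi-definite at every point. Orientability lets me pick a unit normal rendering $A$ positive semi-definite, and the non-totally-geodesic hypothesis guarantees an open set on which $A\ne 0$.

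To establish (a) and the convexity part of (b), I would reduce to the flat case via central projection. In the spherical case the crucial preliminary step is to confine $f(M)$ to an open hemisphere: writing $\s^{n+1}\subset\R^{n+2}$ and considering the height function $h_p(x)=\langle f(x),p\rangle$, one finds $\mathrm{Hess}\,h_p = \langle N,p\rangle A - h_p\,g$, and a maximum-principle argument at a critical point of $h_p$, combined with the semi-definiteness of $A$ and the non-totally-geodesic hypothesis, produces a $p$ with $h_p>0$ on $M$. Gnomonic (central) projection from this open hemisphere onto the tangent hyperplane at its pole is a diffeomorphism sending geodesics to affine lines, so the projected hypersurface $\tilde f\colon M\to\R^{n+1}$ still lies locally on one side of each of its affine tangent hyperplanes, i.e.\ still has semi-definite second fundamental form. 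Sacksteder's theorem in $\R^{n+1}$ then gives that $\tilde f$ is an embedding whose image bounds a convex body and that $M\cong\s^n$; pulling back yields (a) and the convexity in (b). For $\h^{n+1}$ the same strategy works with the Beltrami (Klein) model, which realizes $\h^{n+1}$ as a Euclidean ball in which geodesics are affine segments, and no hemisphere step is required.

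For the rigidity claim (c), which is asserted only in the spherical case, I would invoke a Beez--Killing--Allendoerfer-type theorem: a hypersurface of a space form whose second fundamental form has rank at least three everywhere is rigid. Since $A$ is positive semi-definite on $M$ and strictly positive somewhere, a relative-nullity / Sacksteder-type rank-propagation argument, exploiting the compactness and connectedness of $M$, should upgrade this to full rank of $A$ for $n\ge 3$; the two-dimensional case is then handled by the classical rigidity theorem for ovaloids adapted to the sphere. The main obstacle I expect is the hemisphere confinement step in the sphere: compactness of the ambient makes the height-function argument delicate, and the non-totally-geodesic hypothesis must be used essentially to strictly separate $f(M)$ from the equator rather than merely push it into a closed hemisphere.
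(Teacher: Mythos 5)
First, a point of order: the paper you were given does not prove this statement. It is quoted as background, with the proof attributed entirely to \cite{docarmo-warner}, so your proposal can only be measured against the original source. For (a) and the convexity part of (b), your architecture --- Gauss equation forcing the shape operator to be semi-definite, confinement of $f(M)$ to an open hemisphere, gnomonic (resp.\ Klein) projection carrying totally geodesic hypersurfaces to hyperplanes and hence preserving semi-definiteness of $\alpha$ up to a positive factor, then Sacksteder/van Heijenoort in $\R^{n+1}$ --- is indeed the classical route and essentially the one do Carmo and Warner follow. The substantive gap in this half is the hemisphere step, which you correctly flag as the main obstacle but do not actually supply: the identity $\mathrm{Hess}\,h_p=\langle N,p\rangle A-h_p\,g$ plus ``a maximum-principle argument at a critical point'' does not produce a pole $p$ with $h_p>0$ on $M$, since nothing tells you which $p$ to test and the conclusion is genuinely global rather than pointwise (a great sphere, excluded only by the non-totally-geodesic hypothesis, shows that the hypothesis must enter in an essential global way). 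In \cite{docarmo-warner} this containment is itself a substantial lemma, not a two-line maximum principle.

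The rigidity step (c) contains a genuine error. Positive semi-definiteness of $A$ together with ``not totally geodesic'' does \emph{not} imply that $A$ has rank at least $3$ everywhere, and no rank-propagation argument can make it so: a convex hypersurface of $\s^{n+1}$ may contain open totally geodesic pieces (smoothly perturb a small disk of a great $n$-sphere to one side), which is perfectly compatible with $K\ge 1$, compactness, and $A\not\equiv 0$. On such a region the rank of $A$ is zero, the Beez--Killing theorem gives no information, and the second fundamental form is not pointwise determined by the metric. Handling exactly this degenerate locus is the entire content of Sacksteder's rigidity theorem and of do Carmo--Warner's spherical version of it, which exploit the global convexity and the structure of the totally geodesic set rather than a pointwise rank count. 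Likewise, for $n=2$ the ``classical rigidity theorem for ovaloids'' (Cohn-Vossen) requires strictly positive extrinsic curvature, whereas here $\det A=K-1$ may vanish on open sets. As written, part (c) is not proved.
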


We add that the rigidity of compact  hypersurfaces \,$f:M^n\rightarrow\h^{n+1}$\,
with sectional curvature \,$K\ge -1$\, was conjectured by do Carmo and Warner and settled affirmatively
by the author and  R. L. de Andrade in \cite{andrade-delima}.

The conditions on the sectional curvature \,$K$\, of \,$M$\, in each case
of do Carmo\,--Warner Theorem,  spherical and
hyperbolic, can be unified by stating that
the second fundamental form of the hypersurface is semi-definite, that is, its
\emph{extrinsic curvature} is nonnegative (see Section \ref{subsec-hypersurfaces}).
Under the stronger condition of \emph{positive} semi-definiteness of the second fundamental form
(see \cite[Remark 2.4-(b)]{docarmo-warner}),  S. Alexander established the
following Hadamard--Stoker type theorem for compact hypersurfaces in Hadamard manifolds.

\begin{alexander}[Alexander \cite{alexander}]
Let $f:M^n\rightarrow\mathscr{H}^{n+1}$ ($n\ge 2$) be a compact, connected, and oriented hypersurface
in a Hadamard manifold \,$\ha^{n+1}.$\, If the second fundamental form of \,$f$ is positive semi-definite, then
$f$ is an embedding, $M$ is homeomorphic to $\s^n,$ and $f(M)$ bounds an open convex set in $\ha^{n+1}.$
\end{alexander}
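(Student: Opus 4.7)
The plan is to show that $f(M)$ is the boundary of a compact convex body in $\ha^{n+1}$, and then to derive both the embedding and topological assertions by a covering-space argument.

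First, I would apply Bishop's theorem to convert the infinitesimal hypothesis on the second fundamental form into a local convexity statement: for every $p\in M$ there exists a neighborhood $U_p$ of $p$ in $M$ such that $f|_{U_p}$ is an embedding onto a piece of the boundary of a locally convex set in $\ha^{n+1}$; equivalently, there is a totally geodesic hypersurface $H_p\subset\ha^{n+1}$ through $f(p)$ such that $f(U_p)$ lies on one side of $H_p$ and meets $H_p$ only at $f(p)$. Because $\ha^{n+1}$ is Hadamard, each such geodesic half-space is globally convex, which is what allows the local statement to be propagated globally.

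Next, let $C\subset\ha^{n+1}$ denote the closed convex hull of the compact set $f(M)$. The Cartan--Hadamard theorem implies that $C$ is compact and that $\partial C$ is homeomorphic to $\s^n$ (view $C$ through $\exp_q^{-1}$ from any interior point $q$ of $C$). The central claim is that $f(M)=\partial C$. The inclusion $f(M)\subseteq\partial C$ follows from the local supporting half-space at each $f(p)$, since that half-space contains all of $f(M)$ and hence all of $C$. For the reverse inclusion I would show that $f(M)\cap\partial C$ is both open and closed in $\partial C$: closedness is the compactness of $M$, while openness is invariance of domain applied to the injection $f|_{U_p}:U_p\to\partial C$ furnished by the previous step. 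Connectedness of $\partial C$ then forces equality.

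With $f(M)=\partial C$ in hand, the restriction $f:M\to\partial C\cong\s^n$ is a continuous surjection that is locally a homeomorphism, hence, by compactness, a covering map. Since $n\ge 2$, the target $\s^n$ is simply connected, so $f$ must be a homeomorphism. This simultaneously delivers the embedding assertion, the identification $M\approx\s^n$, and the fact that $f(M)$ bounds the open convex set $\mathrm{int}\,C$. The principal obstacle, in my view, lies in the previous paragraph, specifically in verifying that $f(M)$ is open in $\partial C$: this demands that Bishop's theorem yield a genuine local embedding of $M$ into $\partial C$ even at points where the second fundamental form has a nontrivial kernel, so that invariance of domain can be applied without loss. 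Once that is secured, the remaining arguments are essentially formal.
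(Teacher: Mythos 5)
This statement is quoted in the paper from Alexander's 1977 article and is not proved there, so there is no internal proof to compare against; judging your proposal on its own terms, it has a genuine gap at its central step. You write that the inclusion $f(M)\subseteq\partial C$ ``follows from the local supporting half-space at each $f(p)$, since that half-space contains all of $f(M)$.'' But Bishop's theorem only furnishes a half-space that supports $f(U_p)$ for a \emph{small} neighborhood $U_p$ of $p$; nothing in what precedes guarantees that $f(M\setminus U_p)$ stays on the correct side of $H_p$, and the global convexity of geodesic half-spaces in a Hadamard manifold is irrelevant to that question. Promoting the local support to a global one is precisely the content of Alexander's theorem (and of the van Heijenoort, Sacksteder, and do Carmo--Lima antecedents in $\R^{n+1}$): one must show, typically by a connectedness argument on the set of points admitting a global supporting hypersurface, starting from a point of strict support (e.g.\ a farthest point of $f(M)$ from a fixed origin) and propagating it, that local convexity of an \emph{immersed} compact hypersurface implies global convexity. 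That this fails for $n=1$ (a limaçon is locally convex, yet its inner loop lies in the interior of the convex hull of its image) shows the step cannot be free: the hypothesis $n\ge 2$ must enter in the local-to-global passage itself, whereas in your argument it appears only in the final covering-space step. As written, the claim $f(M)\subseteq\partial C$ assumes what is to be proved.

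A secondary inaccuracy: under mere positive \emph{semi}-definiteness of the second fundamental form, Bishop's theorem yields local support ($f(U_p)$ lies in a closed half-space bounded by $H_p$), not the strict statement that $f(U_p)$ meets $H_p$ only at $f(p)$; the latter requires definiteness. Your closing remarks correctly flag that degenerate points are delicate, but the difficulty you locate there (openness of $f(M)$ in $\partial C$ via invariance of domain) is downstream of, and easier than, the unaddressed global support step. Once $f(M)=\partial C$ is genuinely established, your covering-space conclusion (a local homeomorphism from a compact space onto the simply connected $\s^n$ is a homeomorphism) is correct and is a clean way to finish.
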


Recently, some authors (see, for example, \cite{esp-gal-rosen, esp-oliv, esp-rosen,  oliveira-schweitzer}) have
extended the  Hadamard--Stoker Theorem 
to the context of hypersurfaces in products \,$\overbar{M}\times\R,$\, giving particular attention to
the cases where \,$\overbar M$\, is one of the non flat simply connected space forms.
Considering  all these results, a natural question (raised in \cite{oliveira-schweitzer}) is whether there exist
Hadamard--Stoker type theorems for hypersurfaces in
\,$\ha^n\times\R,$\, where \,$\ha$\, (here and elsewhere) denotes a
general Hadamard manifold. In this paper, we give it an affirmative answer. 

We shall also focus on rigidity of hypersurfaces in
\,$\har$\, and \,$\s^n\times\R.$\, This, however, turns out to be a
delicate matter, since these spaces have
nonconstant sectional curvature.
For instance, in contrast to the behavior of hypersurfaces
in space forms,  two hypersurfaces
of either  \,$\h^n\times\R$\, or \,$\s^n\times\R$\,
with equal shape operators   are not necessarily congruent. As proved by
B. Daniel \cite{daniel}, in order to have congruence in this case, one has to ensure further that
the  height  and  angle functions of the hypersurfaces coincide
(see Section \ref{subsec-hypersurfaces} for definitions).

Taking the above considerations into account, we will investigate the
rigidity of a given hypersurface \,$f:M^n\rightarrow\overbar{M}^{n+1}$\,
in the more restricted class  \,$\mathscr{C}_{\rm ext}(f)$\, of
hypersurfaces \,$g:M^n\rightarrow\overbar{M}^{n+1}$\, whose extrinsic curvature coincides with the extrinsic curvature
of \,$f$\, everywhere on \,$M$\, (cf. \cite[Theorem A]{rosenberg-tribuzy}).
More precisely,  we will say that \,$f$\, is \emph{rigid in} \,$\mathscr{C}_{\rm ext}(f)$\, if, for any
hypersurface \,$g\in\mathscr{C}_{\rm ext}(f),$\,  there exists an isometry
\,$\Phi:\overbar M\rightarrow\overbar M$\, such that \,$g=\Phi\circ f.$\,
 We point out that, when \,$\overbar{M}^{n+1}$\, has constant sectional curvature,
 the concepts of rigidity and rigidity in \,$\mathscr{C}_{\rm ext}(.)$\, are the same.

Our first result, as stated below,  includes a Hadamard--Stoker type theorem
for hypersurfaces in \,$\ha^n\times\R,$\, 
and also a rigidity  theorem
for hypersurfaces in \,$\h^n\times\R$. 

\begin{theorem} \label{th-th1}
Let  $f:M^n\rightarrow{\mathscr{H}}^n\times\R$ ($n\ge 3$) be a
complete connected oriented hypersurface
with positive definite second fundamental form. If the  height function  of  \,$f$
has a critical point, then the following statements hold:

\begin{itemize}[parsep=1ex]
   \item[{\rm a)}] $f$ is a proper embedding, and \,$M$ is either homeomorphic  to $\s^n$ or \,$\R^n.$\, In the latter case,
   \,$f(M)$\, is an unbounded geodesic graph over an open set of a horizontal section of \,$\har.$

   \item[{\rm b)}] $f(M)$ is  the boundary of a convex set in \,${\mathscr{H}}^n\times\R.$

   \item[{\rm c)}] $f$ is rigid in the class \,$\mathscr{C}_{\rm ext}(f)$ when
   \,${\mathscr{H}}^n$ is the hyperbolic space \,$\h^n.$

 \end{itemize}
\end{theorem}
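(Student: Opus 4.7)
The plan is to split the proof according to whether $M$ is compact, and to treat the rigidity assertion (c) separately once the shape of $f(M)$ has been pinned down by (a) and (b). The initial observation that simplifies everything is that $\har$ is itself a Hadamard manifold of dimension $n+1$ (a Riemannian product of Hadamard manifolds), so Alexander's theorem stated in the excerpt applies verbatim whenever $M$ is compact, delivering embeddedness, the sphere topology, and convexity in one stroke. The substance of (a) and (b) is therefore the production of the vertical graph in the noncompact case.

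First I would analyze $h=\pi_\R\circ f$ at the prescribed critical point $p_0$. Because $\n=\partial_t$ is parallel in $\har$, one has the pointwise identity
\[
\mathrm{Hess}\,h(X,Y)\;=\;\theta(p)\,\langle AX,Y\rangle,
\]
where $A$ is the shape operator of $f$ with respect to a unit normal $N$ and $\theta=\langle N,\n\rangle$ is the angle function. At any critical point $\theta=\pm 1$, and positive definiteness of $A$ forces this Hessian to be definite; in particular, every critical point of $h$ is an isolated strict local extremum. After orienting $N$ so that $p_0$ is a strict local minimum at height $t_0=h(p_0)$, the slices $f(M)\cap(\ha^n\times\{t\})$ for $t$ slightly above $t_0$ are small strictly convex topological spheres bounding convex caps on $f(M)$, and I would propagate this picture by letting $t$ increase: as long as $h$ has no further critical value in $(t_0,t)$, the cap is transversely cut by the vertical field $\n$, and the positive definiteness of $A$ prevents the slice from degenerating.

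Two mutually exclusive alternatives arise. If some $t_1>t_0$ is a critical value, then any $p_1\in h^{-1}(t_1)$ is a local maximum by the Hessian argument, the cap family closes up into a compact topological sphere, and Alexander's theorem supplies the compact branch of (a) together with (b). Otherwise the continuation runs for every $t\ge t_0$; completeness forces $h$ to be unbounded above on $M$, and the accumulated cap family exhibits $f(M)$ as a geodesic graph over an open subset of $\ha^n\times\{t_0\}$ via the vertical geodesics $\{q\}\times\R$. The graph structure immediately yields proper embeddedness and $M\cong\R^n$ with a bottom end, and convexity of the region under the graph follows from the strict convexity of each horizontal slice combined with vertical monotonicity; the case in which $p_0$ is a local maximum is symmetric.

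For assertion (c), the target tool is Daniel's rigidity theorem for $\h^n\times\R$, which guarantees congruence of two isometric immersions into $\h^n\times\R$ once their shape operators, angle functions, and height functions coincide up to the natural sign ambiguities. Given $g\in\mathscr{C}_{\rm ext}(f)$, parts (a) and (b) apply to $g$ as well, so $g(M)$ is convex with positive definite second fundamental form. The core step is to upgrade the equality of the scalar Gauss--Kronecker curvatures of $f$ and $g$ to equality of the full shape operators. I would do this via the Gauss equation in $\h^n\times\R$, whose curvature tensor is explicit: since both shape operators are positive definite with the same determinant and $n\ge 3$ furnishes enough rank, an argument in the spirit of Beez--Killing adapted to the product ambient recovers $A$ up to sign from the intrinsic curvature tensor of $M$. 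The height and angle functions are then matched by aligning the convex hypersurfaces $f(M)$ and $g(M)$ along a common critical point, using the rigidity of the graph/sphere description from (a). The hardest step, and the one that is genuinely product-specific, is precisely this decoupling of the full shape operator from the scalar extrinsic curvature in the Gauss equation, since the tangential projection of $\n$ enters the ambient curvature tensor and must be handled with care.
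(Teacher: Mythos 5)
Your overall skeleton (sweep the hypersurface by horizontal slices starting from the critical point, use Morse theory to get $\s^n$ or $\R^n$, invoke Alexander for the compact case and Daniel for rigidity) matches the paper's, but there are genuine gaps at the two places where the real work happens. First, in the noncompact branch you assert that $f(M)$ is a graph ``via the vertical geodesics $\{q\}\times\R$.'' The theorem only claims a \emph{geodesic graph} in the sense defined in Section \ref{subsec-asymptoticrays}: a graph along a family of mutually \emph{asymptotic} geodesic rays. The paper constructs these rays as limits of geodesics from $f(x)$ to a divergent sequence in the convex region $\Lambda$; the resulting direction is transversal to the horizontal sections but need not be vertical, and nothing in your argument shows that the vertical direction lies in the recession cone of $\Lambda$ (if it does not, a vertical line can meet $f(M)$ twice). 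Since you derive properness, $M\cong\R^n$, and convexity from the alleged vertical-graph structure, those conclusions are left unsupported. Second, your dichotomy ``either a second critical value appears or the continuation runs for every $t\ge t_0$'' omits the third possibility that the sweep stops at a finite height $t^*$ with no critical point on $\partial\Omega$ because the slices $M_t$ escape to infinity in $\ha^n$ as $t\to t^*$. Ruling this out is a substantial step in the paper (one shows ${\rm cl}\,\Lambda$ is bounded via the asymptotic-ray argument, hence $\partial\Omega$ is compact, and then flows past $t^*$ to reach a contradiction); ``positive definiteness prevents the slice from degenerating'' does not address it.

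For (c), two points. The hypothesis $g\in\mathscr{C}_{\rm ext}(f)$ is equality of the \emph{sectional} extrinsic curvatures on all $2$-planes, not of the scalar Gauss--Kronecker curvature; with only the latter one cannot recover $A$ even up to sign. Granting the correct hypothesis, your ``Beez--Killing adapted to the product'' is essentially the Dajczer--Rodr\'iguez lemma on flat bilinear forms (this is where rank $\ge 3$, hence $n\ge 3$, enters), and it yields $A_f=A_g$ together with $\|\g\xi_f\|=\|\g\xi_g\|$ and $\theta_f^2=\theta_g^2$ via the Gauss equation \eqref{eq-daniel}. The remaining issue you gloss over is upgrading $\theta_f^2=\theta_g^2$ to $\theta_f=\theta_g$ \emph{globally}: a priori the sign could flip on different components of $\{\theta_f\ne 0\}$. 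The paper resolves this by observing from \eqref{eq-gradtheta} that $\theta$ is strictly decreasing along trajectories of $\g\xi$, so the equality $\theta_f=\theta_g$, valid near the common minimum, propagates along all trajectories to the whole of $M$; ``aligning along a common critical point'' by itself does not deliver this.
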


Some of the assertions of Theorem \ref{th-th1} remain true after replacing
the Hadamard manifold \,$\ha^n$\, by the unit sphere \,$\s^n.$\, More precisely, we have the following result.
\begin{theorem} \label{th-th2}
Let  $f:M^n\rightarrow{\s}^n\times\R$ ($n\ge 3$) be a complete connected oriented hypersurface
with positive definite second fundamental form. If the  height function  of $f$
has a critical point, then the following statements hold:
\begin{itemize}[parsep=1ex]
   \item[{\rm a)}] $f$\, is an embedding and  \,$M$\, is  homeomorphic to \,$\s^n$ or \,$\R^n.$ In the latter case,
   the height function of $f$ is unbounded.

   \item[{\rm b)}] $f$ is rigid in $\mathscr{C}_{\rm ext}(f).$
 \end{itemize}
\end{theorem}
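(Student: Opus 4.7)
I would mimic the Morse-theoretic argument used for Theorem~\ref{th-th1}, noting that neither the Hadamard hypothesis nor the convexity of the ambient is essential for the embedding and topological classification. Set $h=t\circ f$ and, along $f$, decompose $\partial_t=T+\theta N$, where $T=(\partial_t)^\top$, $N$ is the unit normal, and $\theta=\langle N,\partial_t\rangle$ is the angle function. Since $\partial_t$ is parallel in $\s^n\times\R$, differentiating this decomposition yields $\nabla h=T$, $\nabla_X T=\theta AX$, and hence $\operatorname{Hess}h(X,Y)=\theta\,\langle AX,Y\rangle$. Positive definiteness of $A$ then implies that at every critical point of $h$ (where $|\theta|=1$) the Hessian is $\pm A$ and thus definite, so each critical point is a nondegenerate local minimum or local maximum. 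A standard mountain-pass / Morse-theoretic argument excludes saddles and multiple extrema, forcing at most one minimum and at most one maximum.

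\textbf{Classification and embeddedness.} If $M$ is compact, $h$ attains both extrema; Reeb's theorem then gives $M\cong\s^n$. If $M$ is noncompact, the hypothesis provides exactly one critical point, say a minimum $p_0$ (the maximum case being symmetric), yielding a bottom end, and by the standard Morse argument with a single nondegenerate critical point, $M\cong\R^n$. For embeddedness, I would argue exactly as in Theorem~\ref{th-th1}: off $p_0$, the flow of $T/|T|$ is transverse to the horizontal slices $\s^n\times\{c\}$, and each level set $\{h=c\}$ is a strictly convex, embedded hypersurface of that slice (by positive definiteness of $A$); patching the slices via this flow together with a Morse chart at $p_0$ produces the embedding in both the compact and the noncompact case.

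\textbf{Plan for Part (b).} Let $g\in\mathscr{C}_{\rm ext}(f)$. The goal is to reduce to B.~Daniel's congruence theorem~\cite{daniel}, which states that two isometric immersions $M^n\to\s^n\times\R$ with coinciding shape operator, angle function, and $T$-field are congruent. Since $f$ and $g$ induce the same metric on $M$ and share the same extrinsic curvature by hypothesis, the Gauss equations for $f$ and $g$ imply that the ambient sectional curvatures of corresponding tangent planes in $\s^n\times\R$ agree. The explicit formula for sectional curvatures in $\s^n\times\R$ depends only on the angles the plane makes with $\partial_t$, so this forces $|T^f|=|T^g|$ and $|\theta^f|=|\theta^g|$ pointwise. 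Combining this with the Codazzi equation in $\s^n\times\R$ (whose ambient-curvature term explicitly couples $A$ with $T$ and $\theta$) and with the positive definiteness of $A^f$, I would upgrade the equality of absolute values to equality of the fields themselves, after composing $g$ with an ambient isometry that identifies the critical points of $h^f$ and $h^g$ and aligns the corresponding tangent hyperplanes; this equality forces $A^g=A^f$, and Daniel's theorem then yields $g=\Phi\circ f$.

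\textbf{Main obstacle.} The main difficulty is the rigidity portion. Because $\s^n\times\R$ has nonconstant sectional curvature and the Gauss--Codazzi system is coupled with the ambient height and angle data, the passage from ``same extrinsic curvature'' to ``same height, angle, and shape operator'' is not purely algebraic and must exploit the global embedded/convex structure produced in Part~(a). In particular, the sign matching of the angle functions $\theta^f$ and $\theta^g$ requires a global topological argument, since $\theta$ attains the values $\pm1$ only at critical points of the height function and can change sign only in a controlled manner along $M$.
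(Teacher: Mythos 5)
Your proposal follows the paper's overall strategy for both parts --- Morse theory on the height function together with an analysis of the horizontal level sets for (a), and the Gauss equation feeding into Daniel's congruence theorem for (b) --- but in each part the step where the real work lies is asserted rather than proved.

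In part (a), you claim that each level set $\{h=c\}$ is an \emph{embedded} strictly convex hypersurface of the slice $\s^n\times\{c\}$ ``by positive definiteness of $A$.'' Positive definiteness of the second fundamental form of the level set (which does follow from that of $f$, as in Lemma~\ref{lem-transversal}) gives only local convexity; the embeddedness of a compact locally convex hypersurface of $\s^{n}$ is precisely the content of the do Carmo--Warner Theorem and requires the level set to have dimension at least $2$ --- this is exactly where the hypothesis $n\ge 3$ enters, and it is the engine of the whole embeddedness argument. An immersed locally convex closed curve in $\s^2$ need not be embedded, so the claim is genuinely false without this input. Relatedly, ``the hypothesis provides exactly one critical point'' in the noncompact case is not part of the hypothesis: that $\xi$ has exactly one or exactly two critical points is a \emph{conclusion}, which the paper extracts from the normal-region analysis (the set of normal values is open and closed among regular values, with the problematic degeneration --- case (iii) --- ruled out here by compactness of the horizontal sections of $\s^n\times\R$). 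Since $h$ is not known to be proper at the outset, one cannot simply quote standard Morse theory to exclude additional extrema.

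In part (b), the step producing $A_g=A_f$ is missing. Equality of extrinsic curvatures yields, via the Gauss equation, equality of the quadratic expressions $\langle\alpha(X,W),\alpha(Y,Z)\rangle-\langle\alpha(X,Z),\alpha(Y,W)\rangle$ for $f$ and $g$; the passage from this to $A_f=\pm A_g$ is the algebraic rigidity of symmetric bilinear forms of rank at least $3$ (the Beez--Killing/Dajczer--Rodr\'iguez lemma), which the paper invokes through Lemma~\ref{lem-rigidity} and which again uses positive definiteness and $n\ge 3$. Your proposed route through the Codazzi equation and an ambient isometry ``aligning tangent hyperplanes'' does not supply this: Codazzi is a first-order compatibility condition and will not convert $\det A_{f,XY}=\det A_{g,XY}$ into $A_f=A_g$. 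You did correctly isolate the genuinely global issue, namely matching the signs of $\theta_f$ and $\theta_g$; but the paper resolves it not by a topological argument about the zero set of $\theta$, rather by the observation that $\g\theta=-A\,\g\xi$ makes $\theta$ strictly decreasing along the trajectories of $\g\xi$, so that the equality $\theta_f=\theta_g=1$ at the common minimum (after at most a reflection about a horizontal section) propagates to all of $M$, after which Daniel's theorem applies.
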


Let  \,$\q_\epsilon^n$\, be  \,$\h^n$ ($\epsilon =-1$) or \,$\s^n$ ($\epsilon=1$).
As a  consequence of Theorems \ref{th-th1} and \ref{th-th2},
we have the following extensions of two classical results on surfaces in $\R^3.$

\begin{corollary}[Jellett--Liebmann type theorem] \label{cor-jellett-liebmann}
For $n\ge 3$\, and $\epsilon\in\{-1,1\},$  any compact connected constant mean curvature hypersurface
$f:M^n\rightarrow\q_\epsilon^{n}\times\R$ with positive extrinsic curvature
is congruent to an embedded rotational sphere. 
\end{corollary}

\begin{corollary}[Hilbert--Liebmann type theorem]  \label{cor-hilbert-liebmann}
Let \,$M^n_c$\, be a complete, connected and
orientable $n(\ge 3)$-dimensional Riemannian manifold with constant sectional curvature \,$c.$\,
Given an isometric immersion
\,$f:M_c^n\rightarrow\q_\epsilon^n\times\R,\, \,\epsilon\in\{-1,1\},$\,
assume that \,$c>(1+\epsilon)/2.$\,  Then, \,$f$\, is congruent
to an embedded rotational sphere.
\end{corollary}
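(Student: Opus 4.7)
The plan is to derive this corollary from Theorems \ref{th-th1} and \ref{th-th2} in essentially the same way that Corollary \ref{cor-jellett-liebmann} was derived, the key new input being that the constant sectional curvature hypothesis, together with the inequality $c > (1+\epsilon)/2,$ will force via the Gauss equation the second fundamental form of $f$ to be positive definite.

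First, since $c > (1+\epsilon)/2 \ge 0,$ the Bonnet\,--\,Myers theorem yields that $M^n_c$ is compact. Next, applying \eqref{eq-daniel} with $K \equiv c$ gives, for every orthonormal pair $X, Y \in T_xM,$
\[
\det A_{\scriptscriptstyle XY} \;=\; c - \epsilon\bigl(1 - \|\pi_{\scriptscriptstyle XY}\g\xi\|^2\bigr).
\]
Since $\|\pi_{\scriptscriptstyle XY}\g\xi\|^2 \in [0,1],$ the right-hand side is bounded below by $c - 1$ when $\epsilon = 1$ and by $c$ when $\epsilon = -1,$ both strictly positive by hypothesis. Specializing $X = e_i, Y = e_j$ to orthonormal eigenvectors of the shape operator $A$ at a point $x$ yields $\lambda_i\lambda_j = \det A_{\scriptscriptstyle e_i e_j} > 0$ for all $i \ne j,$ so all principal curvatures of $f$ are nonzero and share a common sign. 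After a suitable choice of unit normal, $f$ therefore has positive definite second fundamental form. Since $M^n_c$ is compact, the height function of $f$ attains a critical point, so Theorem \ref{th-th1} (when $\epsilon=-1$) or Theorem \ref{th-th2} (when $\epsilon=1$) applies and gives that $f$ is a proper embedding and that $M^n_c$ is homeomorphic to $\s^n.$

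To conclude that $f$ is congruent to a rotational sphere, I would adapt the Alexandrov reflection argument recalled in Section \ref{subsec-rotationalspheres}, replacing the CMC condition used there with the condition of constant sectional curvature $c.$ Reflection across horizontal sections produces a height $t^*$ such that $f(M)$ is a bigraph over its projection to $\q^n_\epsilon\times\{t^*\};$ in the spherical case, Lemma \ref{lem-transversal} combined with the do Carmo\,--\,Warner theorem applied to the horizontal section $f_{t^*}$ confines this projection, and thus $f(M),$ to $\s^n_+\times\R$ for some open hemisphere $\s^n_+.$ Running the moving plane method with the vertical totally geodesic hyperplanes $\Sigma^{n-1}\times\R\subset\h^n\times\R$ (respectively $(\Sigma^{n-1}\cap\s^n_+)\times\R\subset\s^n\times\R$) then yields a vertical axis about which $f(M)$ is rotationally symmetric, completing the proof. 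The main technical point is to justify the strong maximum principle underpinning this moving plane argument: the governing equation is no longer the quasilinear CMC equation of Corollary \ref{cor-jellett-liebmann} but a fully nonlinear Monge\,--\,Amp\`ere type equation associated with constant sectional curvature; fortunately, this maximum principle is standard under the strict convexity established in the previous paragraph.
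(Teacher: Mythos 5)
The first half of your argument (Myers compactness, positive definiteness of the second fundamental form via \eqref{eq-daniel}, and the application of Theorems \ref{th-th1} and \ref{th-th2} to get an embedded topological sphere) is correct and coincides with the paper's proof; your computation $\det A_{\scriptscriptstyle XY}=c-\epsilon(1-\|\pi_{\scriptscriptstyle XY}\g\xi\|^2)>0$ is just a more explicit version of the paper's appeal to the Gauss equation.

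The gap is in the second half. You propose to obtain rotational symmetry by Alexandrov reflection, replacing the CMC condition by constant sectional curvature, and you dismiss the key analytic ingredient by asserting that the relevant tangency principle ``is standard'' for ``a fully nonlinear Monge--Amp\`ere type equation associated with constant sectional curvature.'' That is not justified for $n\ge 3$: constant sectional curvature is not a single scalar PDE on the graph function but an overdetermined \emph{system} (every $2$-plane must have curvature $c$, i.e.\ $\det A_{\scriptscriptstyle XY}=c-\epsilon(1-\|\pi_{\scriptscriptstyle XY}\g\xi\|^2)$ for all orthonormal pairs), and there is no off-the-shelf Hopf-type maximum principle guaranteeing that two such hypersurfaces tangent at a point, one on one side of the other, coincide locally. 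This is precisely why the moving-plane route works for the CMC Corollary \ref{cor-jellett-liebmann} but is not used here. The paper instead argues directly: by Manfio--Tojeiro's Lemma 3.1, $\g\xi$ is an eigenvector of $A$ away from the two critical points, whence by \eqref{eq-gradtheta} the angle function $\theta$ is constant on each horizontal section; it follows from \eqref{eq-thetaandu} that $\|\nabla u\|$ is constant on each level set of the graphing function $u$, so the trajectories of $\nabla u$ are geodesics of $\q_\epsilon^n$ emanating from $f(x_0)$ and orthogonal to all level sets, forcing the level sets to be round geodesic spheres centered at $f(x_0)$; gluing the two resulting rotational graphs along their common boundary sphere identifies the axes and finishes the proof. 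To repair your argument you would either need to prove a tangency principle for the constant-curvature system in $n\ge 3$, or replace the reflection step by a structural argument of the above kind.
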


Regarding Corollary \ref{cor-hilbert-liebmann},
we should mention that a more general result
was obtained by  Manfio and  Tojeiro in \cite{manfio-tojeiro},
where they classify the  hypersurfaces of constant sectional curvature in \,$\q_\epsilon^n\times\R,$\, \,$n\ge 3.$

Next, we consider the dual case of the above theorems
in which the height function of the hypersurface \,$f$\, has
no critical points. The condition on the second fundamental form is weakened by
assuming it positive semi-definite instead of positive definite. On the other hand,
\,$f$\, is assumed to be a proper immersion, instead of complete.
The precise statement is as follows.

\begin{theorem}  \label{th-nocriticalpoints}
Let $f:M^n\rightarrow\overbar M^n\times\R$ $(n\ge 3)$  be a proper
connected  orientable hypersurface with positive semi-definite second
fundamental form, where $\overbar M^n$ is either a Hadamard manifold
$\ha^n$ or the sphere \,$\s^n.$  Assume that:
\begin{itemize}[parsep=1ex]
  \item[{\rm a)}] The height function of \,$f$ has no critical points.
  \item[{\rm b)}] $f$ is cylindrically bounded if \,\,$\overbar M=\ha^n,$\, that is, there exists a
  closed geodesic ball $B\subset\ha^n$ such that $f(M)\subset B\times\R$.
\end{itemize}
Then, $f$ is an embedding, and
$f(M)=\Sigma\times\R,$ where $\Sigma\subset\overbar M^n\times\{0\}$ is a submanifold
homeomorphic to \,$\s^{n-1}$ which bounds  an open  convex set in \,$\overbar M^n\times\{0\}.$
\end{theorem}

It should be observed  that the assumption (b) in the above theorem is necessary (cf. Remark \ref{rem-neessary}).
It is also worth mentioning that,  in the proof of  the spherical case \,$\overbar M=\s^n,$\, we apply
the celebrated Soul Theorem, due to G. Perelman.


In \cite{currier}, R. Currier  obtained a Hadamard--Stoker type theorem
for complete connected hypersurfaces
\,$f:M^n\rightarrow\h^{n+1}$\, in hyperbolic space.
He proved  that, for such an \,$f,$\,
\,$M$\,  is either homeomorphic to \,$\s^n$\, or  \,$\R^n,$\,  provided
all eigenvalues of its shape operator, at any point,  are at least \,$1.$\,
As shown by the cylinders of \,$\h^{n+1}$\, of constant mean curvature, this hypothesis
on the eigenvalues cannot be replaced by the weaker assumption of positive definiteness of
the second fundamental form.

Currier's result can be viewed  from a more general perspective if we consider \,$\h^{n+1}$\,
as the  warped product \,$\R\times_{e^{t}}\R^n.$\,
In this representation, any \emph{vertical section} \,$\{t\}\times_{e^{t}}\R^n$\, is a
constant mean curvature \,$1$\, horosphere of \,$\h^{n+1}.$\,  Therefore, for a given hypersurface
\,$f:M\rightarrow\R\times_{e^{t}}\R^n,$\, the main hypothesis of Currier's Theorem can
be reinterpreted by saying that the eigenvalues of the shape operator of \,$f$\, at a point \,$x\in M$\,
are all greater than, or equal to, the mean curvature
of the vertical section which contains \,$f(x).$\,

Driven by these considerations, we obtained a Hadamard--Stoker
type theorem for a class of hypersurfaces (which we call $\phi$-convex)
in warped products \,$\R\times_\rho\overbar M^n,$\, where
\,$\overbar M^n$\, is either a Hadamard manifold or the sphere \,$\s^n.$\,
In these spaces, the vertical sections
\,$\{t\}\times_{\rho}\overbar M^n$\, are totally umbilical and, if properly oriented,  have constant mean curvature
\,$|\phi(t)|,$\, where \,$\phi(t)=\rho'(t)/\rho(t).$\, 
In this setting, we  say that a hypersurface in \,$\R\times_\rho\overbar M^n$\, is
$\phi$-\emph{convex} if, at any point,  all the eigenvalues of its shape operator are bounded
bellow by \,$|\phi|$\, (see Section \ref{sec-th2}). The result, then,  reads as follows.

\begin{theorem}  \label{th-warp}
Let $\R\times_\varrho\overbar{M}^n$ ($n\ge 3$)  be a warped product, where $\overbar{M}^n$
is either a Hadamard manifold or the unit sphere $\s^n.$ Consider a connected, complete, and  oriented
strictly $\phi$-convex hypersurface $f:M^n\rightarrow\R\times_\varrho\overbar{M}^n,$
and assume that its height function  has a critical point.
Then, $f$ is a proper embedding and  \,$M$ is either homeomorphic  to \,$\s^n$ or \,$\R^n.$
In the latter case, if \,$\overbar M=\s^n,$\, the height function of $f$ is unbounded.
\end{theorem}

We remark that, when the warping function \,$\rho$\, is constant, strict $\phi$-convexity is equivalent
to definiteness of the second fundamental form. In this manner,
we can say that Theorem \ref{th-warp} is an extension of Theorems \ref{th-th1}-(a) and
\ref{th-th2}-(a). Also, in the case  \,$\overbar M=\ha^n$\, of Theorem \ref{th-warp},
if \,$\rho$\, is convex, then
\,$f$\, behaves as in Theorem \ref{th-th1}. Namely, \,$f(M)$\, bounds a convex set and, when \,$M$\, is noncompact,
is an unbounded geodesic graph (see Remark \ref{rem-convex}).
By the same token, our final result extends the spherical case of Theorem \ref{th-nocriticalpoints} to
proper $\phi$-convex hypersurfaces in \,$\R\times_\varrho\s^n$\, whose height function has no critical points.

\begin{theorem}  \label{th-warpnocriticalpoints}
Assume that there exists a proper,
connected, and  oriented $\phi$-convex hypersurface
$f:M^n\rightarrow \R\times_\varrho\s^n$ $(n\ge 3)$  whose height function
has no critical points.  Then, \,$f$\, is an embedding with unbounded height function (above and below),
and \,$M$ is homeomorphic to the product \,$\R\times\s^{n-1}.$
Assuming, in addition, that $M$ has nonnegative sectional curvature,
the following  hold:
\begin{itemize}[parsep=1ex]

\item ${\rm L}(\varrho)\le 0$ on \,$\R,$\, where \,${\rm L}(\varrho):=(\varrho')^2-\varrho\varrho''.$

\item ${\rm L}(\rho)= 0$ on \,$\R$ if and only if $\rho$ is constant.
\end{itemize}
\end{theorem}

The paper is organized as follows. In Section \ref{sec-preliminaries}, we introduce some notation and quote  some results
which will be used afterwards. In Section \ref{sec-th1}, we prove Theorems \ref{th-th1}--\ref{th-nocriticalpoints}
and Corollaries \ref{cor-jellett-liebmann} and \ref{cor-hilbert-liebmann}
as well. Finally, in Section \ref{sec-th2}, after providing some
background on hypersurfaces in warped products, we prove Theorems \ref{th-warp} and \ref{th-warpnocriticalpoints}.

\section{Preliminaries} \label{sec-preliminaries}

Throughout this paper,
all manifolds  are assumed to be  \,$C^\infty.$\, For a given manifold \,$M,$\, we will
write \,$TM$\, for its tangent bundle.
The simply connected space form of constant sectional  curvature \,$\epsilon\in\{-1,0,1\}$\, and dimension \,$n\ge 2$\,
will be denoted by \,$Q^n_\epsilon,$\, so \,$Q^n_{-1}$\, is the hyperbolic space \,$\h^n,$\,
\,$Q^n_0$\, is the Euclidean space \,$\R^n,$\, and \,$Q^n_1$\, is the unit sphere
\,$\s^n.$\,

Recall that a complete simply connected Riemannian manifold with
non-positive sectional curvature is called a \emph{Hadamard manifold}.
Any Hadamard manifold $\ha^n$ is  diffeomorphic to $\R^n$ through  the exponential map. So, given
\,$p, q\in\ha^n,$\, there exists a unique geodesic \,$\gamma_{pq}$\, joining these two points. A set
\,$\Lambda\subset\ha^n$\, is said to be \emph{convex} if \,$\gamma_{pq}\subset\Lambda$\, whenever
\,$p,q\in\Lambda.$

Given an $n(\ge 2)$-dimensional  Riemannian manifold \,$\overbar M^n,$\, consider
the product \,$\overbar M^n\times\R$\, endowed with the standard Riemannian product
metric. For a tangent vector field \,$Z$\, in  \,$T(\overbar M\times\R)=T\overbar M\oplus T\R,$\, we will write
\[Z=Z_h+Z_v\,, \,\,\, Z_h\in T\overbar M, \,\, Z_v\in T\R,\]
and call \,$Z_h$\, and \,$Z_v$\, the \emph{horizontal component} and the \emph{vertical component} of
\,$Z,$\, respectively.
The projections  of \,$\overbar M^n\times\R$\, onto its first and second factors will be denoted by
\,$\pi_{\scriptscriptstyle\overbar M}$\, and \,$\pi_{\scriptscriptstyle\R},$\, respectively, being \,$\pi_{\scriptscriptstyle\R}$\,
called the \emph{height function}
of \,$\overbar M^n\times\R.$\, The  gradient of \,$\pi_{\scriptscriptstyle\R},$\, which is a parallel field in \,$\overbar M^n\times\R,$\,
will be denoted by \,$\partial_t.$ 

Given \,$t\in\R,$\, the submanifold
\[\overbar M_t:=\overbar M^n\times\{t\}\subset\overbar M^n\times\R\]
will be called the \emph{horizontal section} of \,$\overbar M^n\times\R$\, at level \,$t.$\,
It is easily seen that horizontal sections  are totally geodesic submanifolds of
\,$\overbar M^n\times\R,$\, and that each of them is  isometric to \,$\overbar M^n.$\, For this reason,
we identify the Riemannian connection of
any horizontal section \,$\overbar M_t$\,  with that of \,$\overbar M$\, and denote it by \,$\overbar\nabla.$\,
Geodesics of \,$\overbar M^n\times\R$\, contained in a horizontal section will be called \emph{horizontal}, whereas the
ones tangent to \,$\partial_t$\, will be called \emph{vertical}.

\subsection{Hypersurfaces in product spaces}  \label{subsec-hypersurfaces}
Given an oriented hypersurface
\[f:M^n\rightarrow\mr,\]
we will denote its unit normal field by
$N,$\, its second fundamental form by \,$\alpha$\,, and its shape operator by $A.$\, So,
one has the equalities
\[
\langle\alpha(X,Y),N\rangle=\langle AX,Y\rangle =-\langle\widetilde\nabla_XN,Y\rangle=\langle\widetilde\nabla_XY,N\rangle  \,\,\,\forall X, Y\in TM,
\]
where \,$\langle \,,\, \rangle$\, and  \,$\widetilde{\nabla}$\, stand for the Riemannian metric and
Levi-Civita connection of \,$\mr,$\, respectively.
The  \emph{height function} \,$\xi$\, and the \emph{angle function} \,$\theta$\, of \,$f$\, are defined by the following identities:
\[
\xi(x)=\pi_{\scriptscriptstyle\R}\circ f(x) \quad\text{and}\quad \theta(x)=\langle N(x),\partial_t\rangle, \,\, x\in M.
\]

We shall denote the gradient field and the Hessian form of a function \,$\zeta$\, on \,$M$\,  by
\,$\g\zeta$\, and \,${\rm Hess}\,\zeta,$\, respectively. In particular,
\begin{equation} \label{eq-gradxi}
\g\xi=\partial_t-\theta N.
\end{equation}
This last equality then yields
\[
x\in M \,\,\, \text{is a critical point of} \,\,\, \xi \,\, \Leftrightarrow \,\, N(x)=\pm \partial_t \,\, \Leftrightarrow \,\, \theta(x)=\pm 1\,.
\]
From \eqref{eq-gradxi}, we also have that \,$\widetilde{\nabla}_{X}\,\g\xi=-\theta\widetilde{\nabla}_XN-X(\theta)N.$\, Consequently,
\begin{equation}\label{eq-laplacian}
{\rm Hess}\,\xi(X,Y)=\theta\langle\alpha(X,Y),N\rangle \,\,\, \forall X, Y\in TM.
\end{equation}

Given an open set \,$\Omega\subset M$\,  without critical points of \,$\xi,$\,
a \emph{trajectory} of \,$\g\xi$\, in \,$\Omega$\, is, by definition,
a curve \,$\varphi:I\subset\R\rightarrow\Omega$\, which satisfies
\[
\varphi'(s)=\g\xi(\varphi(s)) \,\, \forall s\in I.
\]
It can be easily proved that, whenever \,$M$\, is complete,
one has \,$I=(-\infty, +\infty).$\, Moreover,
if the closure of \,$\Omega$\,  contains a unique critical point \,$x_0$\,
of \,$\xi,$\, then either
\[
\lim_{s\rightarrow-\infty}\varphi(s)=x_0 \quad\text{or}\quad \lim_{s\rightarrow+\infty}\varphi(s)=x_0\,
\]
according as whether \,$x_0$\, is a local minimum or a local maximum, respectively.
In the first case, one says that the trajectory \,$\varphi$\, is \emph{issuing} from \,$x_0$\,, and, in the second,
that \,$\varphi$\, is \emph{going into} \,$x_0$\, (see \cite{docarmo-lima} for details and proofs).

Concerning the gradient of  \,$\theta$\, on \,$M,$\,
for all \,$X\in TM,$\, we have
\[
X(\theta)=X\langle N,\partial_t\rangle=\langle\widetilde{\nabla}_XN,\partial_t\rangle=-\langle AX,\partial_t \rangle=-\langle A\,\g\xi, X \rangle.
\]
Hence, the following equality holds on \,$M:$
\begin{equation}\label{eq-gradtheta}
  \g\theta=-A\,\g\xi.
\end{equation}


\begin{remark}
When required, we will denote the second fundamental form \,$\alpha$\, of a hypersurface \,$f:M\rightarrow\mr$\,
 by \,$\alpha_f$\,.
The same goes for all  other objects
related to \,$f,$\, including its shape operator \,$A=A_f$\,, and its height and angle functions
\,$\xi=\xi_f$\,, and \,$\theta=\theta_f$\,.
\end{remark}

Consider a hypersurface \,$f:M\rightarrow\mr$\, and
assume that a horizontal section \,$\overbar M_t$\, intersects \,$f(M)$\, transversally. In this case,
as is well known, the set
\,$\xi^{-1}(t)\subset M$\, is an $(n-1)$-dimensional submanifold of \,$M.$\,
Given, then, a connected component \,$M_t$\, of \,$\xi^{-1}(t),$\,
we will call the map
\[
f_t:=f|_{M_t}:M_t\rightarrow\overbar M_t
\]
a \emph{horizontal section} of \,$f$\, at level \,$t.$\,
As unit  normal field  for a horizontal section \,$f_t$\,,
we shall choose the normalized horizontal component \,$\eta$\, of \,$N,$\, that is,
\begin{equation} \label{eq-eta}
\eta:=\frac{N_h}{\|N_h\|}=\frac{\,\,N-\theta\partial_t\,\,}{\sqrt{1-\theta^2}}\,\cdot
\end{equation}

Let $u$ be a differentiable (i.e., $C^\infty$) function defined on a domain $\mathcal D\subset\overbar M.$
The \emph{vertical graph} of \,$u$\, in \,$\overbar M\times\R$\, is defined as the set
\[
\Sigma:=\{(p,u(p))\in \overbar M\times\R\,;\, p\in\mathcal D\}.
\]

It is easily checked that \,$\Sigma$\, is a hypersurface
of \,$\overbar M\times\R$\, (seen as a submanifold). Denoting
by \,$\nabla u$\, the gradient of \,$u$\, in \,$\overbar M$\, and by
\,$\|\nabla u\|$\, its norm, we have that
\begin{equation}\label{eq-normaltograph}
N=\frac{-\nabla u+\partial_t}{\sqrt{1+\|\nabla u\|^2}}
\end{equation}
is clearly a unit normal field on \,$\Sigma$\,
(by abuse of notation, we are writing \,$\nabla u$\, instead of \,$\nabla u\circ\pi_{\scriptscriptstyle\overbar M}$). In particular,
the angle function of \,$\Sigma$\, is
\begin{equation}\label{eq-thetaandu}
\theta=\frac{1}{\sqrt{1+\|\nabla u\|^2}}\cdot
\end{equation}

We shall denote by \,$\Sigma_t$\, the level set of \,$u$\, at \,$t\in u(\mathcal D)\subset\R,$\, that is,
\,$\Sigma_t:=u^{-1}(t).$\, It follows from \eqref{eq-gradxi} and \eqref{eq-normaltograph} that the horizontal component of
\,$\g\xi$\, on \,$\Sigma$\, is parallel to \,$\nabla u,$\, which implies that the projection
\,$\gamma=\pi_{\scriptscriptstyle\overbar M}\circ\varphi$\, of any trajectory
\,$\varphi$\, of \,$\g\xi$\,  to \,$\overbar M$\, is tangent to \,$\nabla u.$\, Thus, such a
\,$\gamma$\,  is  necessarily orthogonal to all  level sets \,$\Sigma_t=u^{-1}(t).$

Let us consider now a general hypersurface
\,$f:M^n\rightarrow\widetilde M^{n+1}.$\,
Recall that the Gauss equation for \,$f$\, is
\begin{equation}  \label{eq-gauss}
\langle R(X,Y)Z,W\rangle=\langle \widetilde{R}(X,Y)Z,W\rangle+\langle\alpha(X,W),\alpha(Y,Z)\rangle-\langle\alpha(X,Z),\alpha(Y,W)\rangle,
\end{equation}
where \,$R$\, and \,$\widetilde R$\, are the curvature tensors of \,$M^n$\, and \,$\widetilde M^{n+1},$\, respectively.
Denoting by \,$K(X,Y)$\, and \,$\widetilde{K}(X,Y)$\, the corresponding sectional curvatures of the plane generated by orthonormal
vectors \,$X, Y\in TM,$\, the Gauss equation becomes
\begin{equation}\label{eq-gauss1}
K(X,Y)=\widetilde K(X,Y)+\langle AX,X\rangle\langle AY,Y\rangle -\langle AX,Y\rangle^2,
\end{equation}
where \,$A$\, is the shape operator of \,$f.$\,

Set \,$\pi_{\scriptscriptstyle XY}$\, for the projection of \,$TM$\, on
\,${\rm span}\,\{X,Y\},$\, and define  the linear operator
\,$A_{\scriptscriptstyle XY}:=\pi_{\scriptscriptstyle XY}A|_{{\rm span}\, \{X,Y\}}\colon{\rm span}\, \{X,Y\}\rightarrow {\rm span}\, \{X,Y\}.$\,
Then, we have
\[
\det A_{\scriptscriptstyle XY}=\langle AX,X\rangle\langle AY,Y\rangle -\langle AX,Y\rangle^2.
\]

Regarding the eigenvalues
\,$\lambda_1\,, \dots ,\lambda_n$\, of the shape operator \,$A,$\,  it is easily shown that
the following assertion holds:
\begin{equation} \label{eq-det}
\lambda_i\ge c\ge 0 \,\,\,\forall i=1,\dots ,n \,\,\, \Rightarrow \,\,\, \det A_{\scriptscriptstyle XY}\ge c^2 \,\,\,\, \forall \{X,Y\} \,\,\text{orthonormal}.
\end{equation}
Furthermore, if the first inequality on the left is strict, so is the one on the right.

The \emph{extrinsic curvature} of \,$f:M^n\rightarrow\widetilde M^{n+1}$\, is defined by
\[K_{\rm ext}(f)(X,Y):=K(X,Y)-\tilde{K}(X,Y), \,\, X,Y\in TM.\]
We shall denote by
\,$\mathscr{C}_{\rm ext}(f)$\, the class of all hypersurfaces \,$g:M^n\rightarrow\overbar{M}^n\times\R$\,
whose extrinsic curvature coincides with that of \,$f,$ that is, those \,$g$\, which satisfy:
\[K_{\rm ext}(f)(X,Y)=K_{\rm ext}(g)(X,Y) \,\, \forall X, Y\in TM.\]

Finally, we remark that, when \,$\widetilde M^{n+1}=Q^n_\epsilon\times\R,$\,
the  equation \eqref{eq-gauss1} takes the form
\begin{equation}\label{eq-daniel}
K(X,Y)=\det A_{\scriptscriptstyle{XY}}+\epsilon(1-\|\pi_{\scriptscriptstyle{XY}}\,\g\xi\|^2)
\end{equation}
(see, e.g.,  \cite{daniel}).

\subsection{Asymptotic rays in \,$\har$}  \label{subsec-asymptoticrays}
Given a Hadamard manifold \,$\ha^n,$\, we have that \,$\har$\, is also a Hadamard manifold. Thus,
we can consider the concept of asymptotic rays in this product space and profit from its properties
(for details and proofs we refer the reader to \cite[Section 9]{bishop-oneill}).

We say that two unit speed geodesic rays \,$\gamma, \sigma:[0,\infty)\rightarrow\har$\, are
\emph{asymptotic} if there is a constant \,$c>0$\, such that
\,${\rm dist}(\gamma(s),\sigma(s))\le c \,\, \forall s\in [0,\infty),$\,
where \,${\rm dist}$\, stands for the distance function on \,$\har.$\,

This concept induces an equivalence relation \,$\sim$\, in the
set of all unit speed geodesic  rays of \,$\har.$\,
The \emph{asymptotic boundary} of \,$\har$\, is  defined as the set of all equivalence classes determined by \,$\sim.$\,
In this setting, we remark the following nice property of geodesic rays:
Given \,$p\in\har,$\, and a geodesic ray \,$\gamma:[0,\infty)\rightarrow\har,$\,  there
exists a unique unit speed geodesic ray \,$\sigma_p$\,   emanating
from \,$p$\, (i.e., \,$\sigma_p(0)=p$)  which is asymptotic to \,$\gamma.$\,
Moreover, the tangent field
\[p\in\har\mapsto\sigma_p'(0)\in T_p(\har)\]
is proven to be continuous (see  \cite[Proposition 9.6]{bishop-oneill}).

The ray \,$\sigma_p$\, can be constructed as follows. Take a sequence \,$s_k\rightarrow +\infty$\, in \,$\R$\, and,
for each \,$k\in\N,$\, consider the geodesic \,$\sigma_k$\, from \,$p$\, to
\,$\gamma(s_k).$\, Then, it is shown that the sequence \,$(\sigma_k)$\, converges to \,$\sigma_p$\, (see \cite[Proposition 9.2]{bishop-oneill}).

Given a geodesic \,$\gamma:\R\rightarrow\har,$\, one has
\[
\frac{d}{ds}\langle\gamma'(s),\partial t\rangle=\langle\widetilde\nabla_{\gamma'}\gamma'(s),\partial t\rangle=0,
\]
that is, the angle between \,$\gamma'(s)$\, and \,$\partial_t$\, is constant along \,$\gamma.$\, In particular,
a complete geodesic of \,$\har$\, is either horizontal or transversal to all  horizontal sections.

By analogy with the idea of vertical graph, 
we shall employ the notion of asymptotic geodesic rays to introduce the following
concept of graph in \,$\har.$

\begin{definition}
Let \,$U\subset\ha_t$\, be a subset of a horizontal section \,$\ha_t$\,.  We say that a set
\,$\mathcal G\subset\har$\, is a \emph{geodesic graph} over \,$U,$\, if there exists a bijection
\,$q\in U\leftrightarrow p=p(q)\in\mathcal G$\, satisfying the following conditions:
\begin{itemize}[parsep=1ex]
  \item For each pair \,$(q,p(q))\in U\times\mathcal G,$\, there is a  geodesic ray \,$\sigma_q$\, emanating from \,$q$\,
  which intersects \,$\mathcal G$\, only at \,$p.$
  \item For all \,$q, q'\in U,$\, \,$\sigma_q$\, is asymptotic to \,$\sigma_{q'}$\,  (Fig. \ref{fig-geodesicgraph}).
\end{itemize}
\end{definition}

\begin{figure}[htbp]
\includegraphics{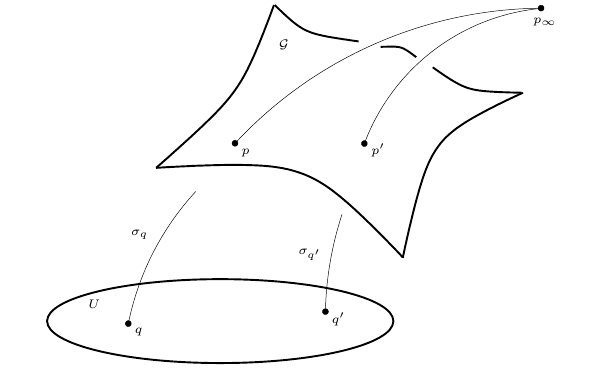}
\caption{A geodesic graph in $\ha^n\times\R$}
\label{fig-geodesicgraph}
\end{figure}

\subsection{Rotational spheres in \,$\q_\epsilon^n\times\R$}  \label{subsec-rotationalspheres}
Concluding this preliminary section, we shall briefly consider rotational hypersurfaces of
\,$\q_\epsilon^n\times\R.$\, Such a  hypersurface is the orbit \,$\Sigma_{\scriptscriptstyle C}$\,  of
a curve \,$C$\, of \,$\q_\epsilon^n\times\R$\, under the action of the group of isometries of
\,$\q_\epsilon^n\times\R$\, which  fix a vertical geodesic \,$\{p\}\times\R, \, p\in\q_\epsilon^n.$\,
The curve \,$C$\, is  called the \emph{profile curve} of \,$\Sigma_{\scriptscriptstyle C}$\,.

By choosing suitable profile curves, one can construct rotational hypersurfaces
in \,$\q_\epsilon^n\times\R$\, with special properties.
The general procedure is analogous to the one for the construction of the well known Delaunay surfaces, that is,
the profile curve \,$C$\, is a solution of a certain differential equation which is obtained from the conditions
imposed on \,$\Sigma_{\scriptscriptstyle C}$\,.

With this approach, nonzero constant mean curvature (CMC, for short) rotational hypersurfaces
in \,$\q_\epsilon^n\times\R$\, were obtained by Hsiang and Hsiang \cite{hsiang}, for \,$\epsilon=-1,$\, and
by R. Pedrosa \cite{pedrosa}, for \,$\epsilon=1.$\,
Furthermore, by applying the Alexandrov reflection technique,  Hsiang and Hsiang were able to prove that
any compact \emph{embedded} CMC hypersurface
of \,$\h^n\times\R$\, is spherical and rotational. 

For compact \emph{embedded} CMC hypersurfaces  \,$f:M^n\rightarrow\s^n\times\R,$\, in general, one can  apply Alexandrov reflection
with respect to horizontal sections \,$S_t:=\s^n\times\{t\}$\, to prove that, for some \,$t^*\in\R,$\,
\,$f(M)$\, is a \emph{bigraph} over its projection \,$\pi(f(M))$\, to \,$S_{t^*}.$\, It means that
\,$S_{t^*}$\, separates \,$f(M)$\, into two symmetric connected components, and each of them is a graph over \,$\pi(f(M)).$\,
If, in addition, there is an open hemisphere \,$\s^n_+$\, of \,$\s^n$\, such that \,$f(M)\subset S_+^n\times\R,$\, then one can
perform Alexandrov reflection on ``hyperplanes'' \,$(\Sigma^{n-1}\cap S_+^n)\times\R,$\, where \,$\Sigma^{n-1}$\, is a totally geodesic
$(n-1)$-sphere of \,$\s^n,$\, and then conclude that \,$f$\, is, in fact,  rotational  
(see \cite[Section 1 -- pg 144]{abresch-rosenberg} and \cite[Section 5]{cheng-rosenberg}).

In \cite{aledo-espinar-galvez},  Aledo, Espinar and Gálvez considered surfaces
of \,$\q_\epsilon^2\times\R$\, with constant sectional curvature.
They showed that, for any given \,$c>(\epsilon+1)/2,$\,
there exists a unique complete surface \,$f:M^2\rightarrow\q_\epsilon^2\times\R$\,
with constant sectional curvature \,$c$\,. Such a surface is necessarily rotational and homeomorphic to
\,$\s^2.$\, As  mentioned in the introduction, an analogous result was obtained by
Manfio and Tojeiro for hypersurfaces \,$f:M^n\rightarrow\q_\epsilon^n\times\R$\, $(n\ge 3$)
as a consequence of their main theorems in \cite{manfio-tojeiro}. In the next section, we shall give
it a new proof (cf. Corollary \ref{cor-hilbert-liebmann}).

\section{Results on Hypersurfaces in \,$\har$\, and \,$\s^n\times\R$}  \label{sec-th1}

For the proof of  assertions (a) and (b) of Theorem \ref{th-th1}, we apply Morse Theory to  show that,
under the given conditions,
the height function of \,$f$\,  has either  one critical point, and then \,$M$\, is homeomorphic to \,$\R^n,$\,
or two critical points, and then  \,$M$\, is  homeomorphic to \,$\s^n.$\, In both cases, \,$f$\, is proven to be a proper
embedding by means of the Alexander Theorem \cite{alexander} (see Introduction).
The convexity property  will be derived from 
a result by  Bishop \cite{bishop}, which states that an embedded
hypersurface in a Riemannian manifold with positive
definite second fundamental form is strictly locally convex.
Then, we apply (a) to show   that, if \,$M$\, is noncompact, then
\,$f(M)$\, is a geodesic graph in \,$\har.$\,
This  part of the proof is based on techniques developed by
Heijenoort \cite{heijenoort}, and  do Carmo and Lima  \cite{docarmo-lima}.
Finally, the rigidity of \,$f$\, in \,$\mathscr{C}_{\rm ext}(f)$\, will be obtained from
B. Daniel's Fundamental Theorem \cite{daniel} for hypersurfaces in \,$Q^n_\epsilon\times\R.$

First, we shall establish the following lemmas.

\begin{lemma} \label{lem-transversal}
Let  $f:M^n\rightarrow\overbar M^n\times\R$ ($n\ge 3$) be an oriented hypersurface
with positive semi-definite (resp. definite) second fundamental form. Then, any
horizontal section $f_t:M_t\rightarrow\overbar M_t$ of \,$f$, if properly oriented,  
has positive semi-definite (resp. definite) second fundamental form.
\end{lemma}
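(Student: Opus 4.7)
The plan is to exhibit an explicit pointwise identity relating the second fundamental form $\alpha_{f_t}$ of a horizontal section to the second fundamental form $\alpha_f$ of the ambient hypersurface, when both are evaluated on vectors tangent to $M_t$. The key input is that the horizontal section $\overbar M_t$ is totally geodesic in $\overbar M^n\times\R$, together with the formula \eqref{eq-eta} for the horizontal unit normal $\eta$.

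First I would fix $x\in M_t$ and note that, by the transversality used to define a horizontal section, $|\theta(x)|<1$, so $\eta=(N-\theta\partial_t)/\sqrt{1-\theta^2}$ is a well-defined unit vector field along $f_t$, tangent to $\overbar M_t$ and normal to $df(TM_t)$ inside $\overbar M_t$. Since $f(M_t)\subset\overbar M_t$, any $X,Y\in T_xM_t$ satisfy $df(X),df(Y)\in T\overbar M_t$, i.e.\ they are orthogonal to $\partial_t$. Extending $Y$ to a local tangent field of $f_t$ and using that $\overbar M_t$ is totally geodesic, $\widetilde\nabla_X df(Y)$ also lies in $T\overbar M_t$; indeed, since $\partial_t$ is parallel in $\overbar M^n\times\R$,
\[
\langle\widetilde\nabla_X df(Y),\partial_t\rangle=X\langle df(Y),\partial_t\rangle-\langle df(Y),\widetilde\nabla_X\partial_t\rangle=0.
\]

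Next I would take the inner product of $\widetilde\nabla_X df(Y)$ with $N=\theta\partial_t+\sqrt{1-\theta^2}\,\eta$. The $\partial_t$-term vanishes by the displayed equation, leaving
\[
\langle\alpha_f(X,Y),N\rangle=\sqrt{1-\theta^2}\,\langle\widetilde\nabla_X df(Y),\eta\rangle=\sqrt{1-\theta^2}\,\langle\alpha_{f_t}(X,Y),\eta\rangle,
\]
where in the last step I use that $\overbar M_t$ being totally geodesic identifies its intrinsic connection with $\widetilde\nabla$ restricted to $T\overbar M_t$, so the $\eta$-component of $\widetilde\nabla_X df(Y)$ computes exactly the second fundamental form of $f_t\hookrightarrow\overbar M_t$ with respect to $\eta$.

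To finish, I would set $X=Y$ to read off the scalar identity
\[
\langle\alpha_{f_t}(X,X),\eta\rangle=\frac{1}{\sqrt{1-\theta^2}}\,\langle\alpha_f(X,X),N\rangle,
\]
where $1/\sqrt{1-\theta^2}>0$. Thus positive semi-definiteness (resp.\ definiteness) of $\alpha_f$ on $TM$, restricted to the subspace $TM_t\subset TM$, transfers without change to $\alpha_{f_t}$ oriented by $\eta$; choosing the opposite orientation $-\eta$ would flip the sign, which accounts for the phrase \emph{properly oriented}. The only step requiring any care is the vanishing of the $\partial_t$-component of $\widetilde\nabla_X df(Y)$, and that follows at once from the parallelism of $\partial_t$ together with $df(Y)\perp\partial_t$; no genuine obstacle is expected.
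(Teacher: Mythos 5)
Your proof is correct and follows essentially the same route as the paper: both arguments reduce to the identity $\langle\alpha_{f_t}(X,X),\eta\rangle=\tfrac{1}{\sqrt{1-\theta^2}}\langle\alpha_f(X,X),N\rangle$, obtained from the fact that $\overbar M_t$ is totally geodesic (equivalently, that $\partial_t$ is parallel) and from the expression \eqref{eq-eta} for $\eta$. The only difference is that you verify the vanishing of the $\partial_t$-component of $\widetilde\nabla_X df(Y)$ explicitly rather than citing the totally geodesic property directly, which is a matter of presentation, not substance.
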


\begin{proof}
Since \,$\overbar M_t$\, is  totally geodesic in
\,$\overbar M^n\times\R,$\, we have that
\[
\overbar\nabla_XX=\widetilde{\nabla}_XX  \,\,\,  \forall X\in T\overbar M_t\,.
\]
Thus, orienting \,$f_t$\, as in \eqref{eq-eta}, for all \,$X\in TM_t$\,, 
we have
\[
\langle\alpha_{f_t}(X,X),\eta\rangle=\langle\overbar\nabla_XX,\eta\rangle = 
\frac{1}{\sqrt{1-\theta^2}}\langle\widetilde{\nabla}_XX,N\rangle
=\frac{1}{\sqrt{1-\theta^2}}\langle\alpha_f(X,X),N\rangle\ge 0.
\]
Hence,  \,$\alpha_{f_t}$\, is positive semi-definite.
If \,$f$\, has positive definite second fundamental form,  the above inequality
is strict, and then \,$f_t$\, has positive definite second fundamental form as well.
\end{proof}

\begin{lemma} \label{lem-rigidity}
Let $f:M^n\rightarrow\q_\epsilon^n\times\R$ $(n\ge 3)$
be an oriented hypersurface whose shape operator $A_f$  has rank at least $3$ everywhere.
Then, if $g:M^n\rightarrow\q_\epsilon^n\times\R$ is a hypersurface in  $\mathscr{C}_{\rm ext}(f)$\,
(see Section \ref{subsec-hypersurfaces}), there exists
a unit normal field $N_g\in TM_g^\perp$ such that the corresponding shape operator $A_g$,  the height function
$\xi_g$, and the angle function $\theta_g$ satisfy the following identities:
\begin{itemize}[parsep=1ex]
  \item $A_f=A_g$.
  \item $\|\g\xi_f\|=\|\g\xi_g\|.$
  \item $\theta_f^2=\theta_g^2.$
\end{itemize}
\end{lemma}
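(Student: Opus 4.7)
The plan is to extract, from the extrinsic-curvature hypothesis, a pointwise identity between the shape operators of $f$ and $g$, and then read off the remaining scalar data from the ambient Gauss equation \eqref{eq-daniel}.

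First I would reformulate the hypothesis algebraically. Because $f$ and $g$ are isometric immersions of the same Riemannian manifold $(M,\langle\,,\,\rangle)$, the intrinsic sectional curvature $K(X,Y)$ is common to both; combining the Gauss equation \eqref{eq-gauss1} with Daniel's identity \eqref{eq-daniel} then yields, for $h\in\{f,g\}$ and every orthonormal pair $X,Y\in TM$,
\[
K_{\rm ext}(h)(X,Y)=\det (A_h)_{\scriptscriptstyle XY}.
\]
Hence the hypothesis $g\in\mathscr{C}_{\rm ext}(f)$ translates into $\det (A_f)_{\scriptscriptstyle XY}=\det(A_g)_{\scriptscriptstyle XY}$ on every $2$-plane of $TM$.

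The main step---and the principal obstacle---is the passage from this identity on all $2\times 2$ Gauss minors to an identity between the full symmetric endomorphisms. This is exactly the content of the classical Beez--Killing algebraic rigidity lemma (see, e.g., \cite{sacksteder1}): two symmetric operators on a Euclidean space whose exterior squares coincide, one of which has rank at least three, must agree up to a global sign. Applied pointwise at every $x\in M$, where $A_f(x)$ has rank $\ge 3$ by hypothesis, this produces $A_g(x)=\sigma(x)A_f(x)$ for some $\sigma(x)\in\{\pm 1\}$. Continuity of both shape operators, combined with the fact that $A_f$ is nowhere of rank below $3$, forces $\sigma$ to be locally constant on $M$; after flipping $N_g$ to $-N_g$ on those connected components of $M$ where $\sigma=-1$, I obtain the first claim $A_f=A_g$ everywhere on $M$.

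With $A_f=A_g$ secured and $\epsilon=\pm 1$, the remaining identities drop out of \eqref{eq-daniel}. Writing that equation for $f$ and for $g$ and subtracting gives
\[
\|\pi_{\scriptscriptstyle XY}\,\g\xi_f\|^2=\|\pi_{\scriptscriptstyle XY}\,\g\xi_g\|^2
\]
on every orthonormal pair $X,Y$. Picking an orthonormal basis $\{e_1,\ldots,e_n\}$ of $T_xM$ and summing the pointwise identity $\|\pi_{\scriptscriptstyle e_ie_j}\g\xi\|^2=\langle\g\xi,e_i\rangle^2+\langle\g\xi,e_j\rangle^2$ over the $\binom{n}{2}$ index pairs $i<j$ yields $(n-1)\|\g\xi\|^2$, so $\|\g\xi_f\|=\|\g\xi_g\|$ on $M$. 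Finally, squaring \eqref{eq-gradxi} and using $\|N\|=\|\partial_t\|=1$ gives the pointwise identity $\theta^2=1-\|\g\xi\|^2$, valid for any oriented hypersurface in a product $\overbar M\times\R$; applying this to both $f$ and $g$ delivers $\theta_f^2=\theta_g^2$, completing the plan.
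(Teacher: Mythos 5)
Your proposal is correct and follows essentially the same route as the paper: both reduce the statement to a pointwise algebraic rigidity of the shape operator under the rank $\ge 3$ hypothesis (equality of all $2\times 2$ minors forcing $A_g=\pm A_f$), and then read off $\|\g\xi\|$ and $\theta^2$ from \eqref{eq-daniel} and \eqref{eq-gradxi}. The only difference is cosmetic: the paper invokes Lemma 2.1 of Dajczer--Rodr\'iguez to produce an isometric normal-bundle isomorphism $\mathfrak{B}$ with $\alpha_g=\mathfrak{B}\circ\alpha_f$ --- which in codimension one is precisely the Beez--Killing statement you use --- so the sign ambiguity you resolve by continuity and flipping $N_g$ is instead absorbed into the choice $N_g=\mathfrak{B}N_f$.
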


\begin{proof}
Since the sectional curvature determines the tensor curvature, and
the extrinsic curvatures of \,$f$\, and \,$g$\, coincide, it follows from  Gauss equation \eqref{eq-gauss} that
\begin{eqnarray}
\langle\alpha_f(X,W),\alpha_f(Y,Z)\rangle &- & \langle\alpha_f(X,Z),\alpha_f(Y,W)\rangle    \nonumber\\
                                                                                  & = &  \langle\alpha_g(X,W),\alpha_g(Y,Z)\rangle-\langle\alpha_g(X,Z),\alpha_g(Y,W)\rangle \nonumber
\end{eqnarray}
for all \,$X,Y,Z,W\in TM.$\,
Therefore, since the rank of \,$A_f$\, on \,$M$\, is at least \,$3,$\,  Lema 2.1 of \cite{dajczer-rodriguez} applies and gives that
there exists an isometric bundle isomorphism
\,$\mathfrak{B}: TM_f^\perp\rightarrow TM_g^\perp$\,
satisfying \,$\alpha_g=\mathfrak{B}\circ\alpha_f$. In particular,  \,$N_g:=\mathfrak{B}N_f$\, is a unit
normal field to \,$g.$\, Denoting by \,$A_g$\, the shape operator of \,$g$\, with respect to \,$N_g$\,,
for all \,$X, Y\in TM,$\, one has
\[
\langle A_gX,Y\rangle N_g=\alpha_g(X,Y)=\mathfrak{B}\alpha_f(X,Y)=\langle A_fX,Y\rangle\mathfrak{B} N_f=\langle A_fX,Y\rangle N_g\,,
\]
which implies that
\,$A_f=A_g$\, everywhere on \,$M.$\,
Considering now equality \eqref{eq-daniel}, we have that
\,$\|\g\xi_f\|=\|\g\xi_g\|$\, and, from \eqref{eq-gradxi}, that
\,$\theta_f^2=\theta^2_g$\, on \,$M.$
\end{proof}

\begin{proof}[Proof of Theorem \ref{th-th1}]
Let  \,$x_0\in M$\, be a critical point of the height function \,$\xi.$\, We can assume
without loss of generality that \,$x_0$\, is a local minimum, and that
\,$\xi(x_0)=0.$\, Since the second fundamental form \,$\alpha$\, is positive definite,
equality \eqref{eq-laplacian} gives that  \,${\rm Hess}\,\xi$\, is positive definite at \,$x_0$\,,
which implies that \,$x_0$\, is a strict local minimum point of \,$\xi.$

Suppose that  \,$f_t:M_t\rightarrow\ha_t$\, is a horizontal section of \,$f$\, at  level \,$t>0.$\,
Following do Carmo and Lima \cite{docarmo-lima}, we say that \,$f_t$\, (or, equivalently, \,$M_t$)
is a \emph{normal section} (for \,$x_0$)  if the following conditions are satisfied:

\begin{itemize}[parsep=1ex]
  \item $M_t$\, is homeomorphic to \,$\s^{n-1}$\, and bounds an open region \,$\Omega_t\subset M$\,
  which contains only one critical point of \,$\xi;$\, namely, \,$x_0.$

  \item There exists an homeomorphism  \,$\psi:{\rm cl}\, B \rightarrow {\rm cl}\,\Omega_t$\, such
  that \,$\psi(\partial B)=M_t$\,, where \,$B$\, is an open  ball of \,$\R^n$ and
  ${\rm cl}$ denotes closure.
\end{itemize}

When \,$f_t$\, is a normal section,  we say that \,$t$\, is a \emph{normal value}  and  \,$\Omega_t$\, is
a \emph{normal region} for \,$x_0$\,. We then write
\[
{I}:=\{t>0 \,;\, t \,\, \text{is a normal value}\} \quad\text{and}\quad \Omega:=\bigcup\Omega_t\,, \,\, {t\in I}.
\]

It is clear from its definition that
\,$\Omega$\, is a nonempty open set of $M$\, which is  homeomorphic to \,$\R^n.$
Setting \,$\partial\Omega$\, for the boundary of \,$\Omega,$\, we distinguish the following mutually exclusive cases:

\begin{itemize}[parsep=1ex]
  \item[i)] $\Omega=M,$\, i.e., \,$\partial\Omega=\emptyset.$
  \item [ii)]$\Omega\ne M$\, and \,$\partial\Omega$\, contains critical points of \,$\xi.$
  \item [iii)] $\Omega\ne M$\, and \,$\partial\Omega$\, contains no critical points of \,$\xi.$
\end{itemize}

By Lemma \ref{lem-transversal},  each normal section \,$f_t:M_t^{n-1}\rightarrow\ha^n_t$\,
has positive definite second fundamental form. Since we are assuming \,$n\ge 3,$\, it follows
from  Alexander Theorem  that \,$f_t$\, is an embedding and \,$f(M_t)$\,
bounds a compact convex set in \,$\ha_t.$\, In particular, for all \,$t\in I,$\,
\,$f|_{\Omega_t}$\, is a proper embedding. Thus,
\,$f(\Omega_t)$\, separates \,$\ha^n\times[0,t)$\, into two connected components,
where one of them, say \,$\Lambda_t$\,, is bounded.

We claim that  \,$\Lambda_t$\, is convex. To see this,  observe first that the mean
curvature vector of \,$f$\, along \,$\Omega_t$\, points to \,$\Lambda_t$\,,
that is, \,$\Lambda_t$\, is the \emph{mean convex side} of \,$f|_{\Omega_t}$\,.
Since the second fundamental form of
\,$f$\, is positive definite, a theorem by R. Bishop \cite{bishop} gives that \,$f$\, is strictly locally convex, that is,
for each \,$x\in\Omega_t$\,,  there is a neighborhood \,$V\subset T_xM$\,
of the null vector in the tangent space of \,$M$\, at \,$x,$\, such
that \,$\exp_{f(x)} V\cap {\rm cl}\,\Lambda_t=\{f(x)\}.$\,  Here, \,$\exp$\, stands for the exponential map of
the ambient space \,$\har.$\,

\begin{figure}[htbp]
\includegraphics{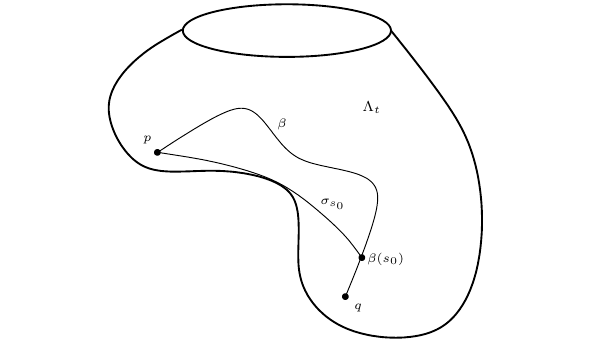}
\caption{Proof that $\Lambda_t$ is convex}
\label{HnXR-fig1}
\end{figure}

Suppose, for the sake of contradiction, that there are points \,$p, q\in\Lambda_t$\, such that
the geodesic of \,$\har$\, which joins them  is not contained in \,$\Lambda_t$\,. Consider, then, a curve
\,$\beta:[0,1]\rightarrow\Lambda_t$\, such that \,$\beta(0)=p$\, and \,$\beta(1)=q.$\,
For each \,$s\in (0,1],$\, let \,$\sigma_s$\, be the geodesic from \,$p$\, to \,$\beta(s).$\, For a small
\,$s,$\, \,$\sigma_s$\, is in \,$\Lambda_t$\,. So, there is \,$s_0\in (0,1]$\, such that
\,$\sigma_{s_0}$\, is tangent to \,$f(M)$\, at some point, which clearly violates the local convexity of
\,$f$\, (Fig. \ref{HnXR-fig1}).  Therefore, \,$\Lambda_t$\, is convex.

Suppose that (i) holds. In this case, 
\,$M=\Omega=\bigcup\Omega_t$\, is homeomorphic to \,$\R^n.$\, Also, by
the above considerations, 
\,$f$\, is a proper embedding, and \,$f(M)$\, is  the boundary of the open convex  set \,$\Lambda:=\bigcup\Lambda_t$\,, \,$t\in I.$

Let us prove that the interval $I$ must be unbounded if \,$\Omega=M.$\,  Assuming otherwise,
we have \,$t^*:=\sup I<\infty.$ Hence, the horizontal section \,$\mathscr{H}_{t^*}$
is disjoint from \,$f(M),$\, for \,$\xi$\,
has no critical points but $x_0$\,. In particular, \,$\mathscr{H}_{t^*}\subset\Lambda.$\,
Consider an arbitrary divergent sequence \,$p_k\in\mathscr{H}_{t^*}, \,\, k\in\N.$\,
For each \,$k\in\N,$\, let \,$\gamma_k:[0, a_k]\rightarrow\Lambda$\, be the unit speed geodesic
of \,$\har$\, from \,$f(x_0)$\, to \,$p_k$\,. Passing to a subsequence, if necessary, we can assume that
\[\gamma_k'(0)\rightarrow Z_0\in T_{f(x_0)}(\har), \,\,\, \|Z_0\|=1.\]

Denote by \,$\gamma:[0,+\infty)\rightarrow\har$\, the unit speed geodesic ray of \,$\har$\, such that
\,$\gamma(0)=f(x_0)$\, and \,$\gamma'(0)=Z_0$\,.
Clearly, each geodesic segment $\gamma_k$ is contained in the closure of the  convex set $\Lambda'\subset\har$
bounded by \,$f(M)$\, and \,$\ha_{t^*},$\, which implies that  \,$\gamma\subset {\rm cl}\,\Lambda'.$\,
Consequently,
\,$\gamma$\, is a  horizontal geodesic ray emanating from \,$f(x_0),$\, i.e.,
it is entirely contained in \,$\ha_0$\, (otherwise, it would
be transversal to \,$\ha_{t^*}$\, and would not be contained in \,${\rm cl}\,\Lambda'$).
However, the only point of \,$\ha_0$\, in \,${\rm cl}\,\Lambda'$\, is \,$f(x_0),$\,  which is a contradiction.
Therefore, if (i) occurs, \,$I$\, is unbounded, and so is the height function of $f.$


\begin{figure}[htbp]
\includegraphics{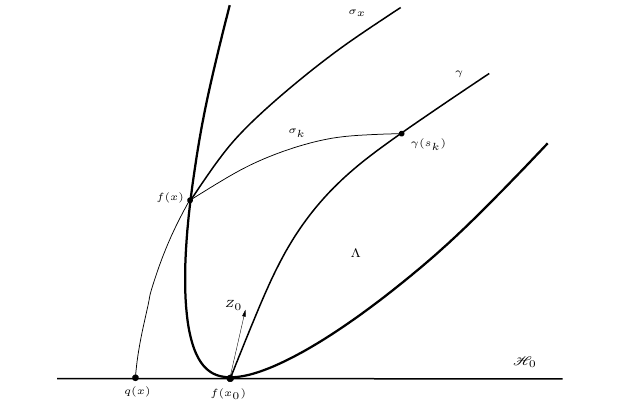}
\caption{Proof that $f(M)$ is a geodesic graph}
\label{HnXR-fig2}
\end{figure}

Still assuming (i), we shall  show that
\,$f(M)$\, is  a geodesic graph over an open connected set $U$ of \,$\ha_0$\,.
For that,  consider an arbitrary divergent sequence \,$p_k\in\Lambda, \,\, k\in\N.$\,
Just as above, construct from that sequence a unit speed geodesic ray
\,$\gamma:[0,+\infty)\rightarrow\har$\, emanating  from \,$f(x_0).$\,
Again, since \,$\Lambda$\, (and so \,${\rm cl}\,\Lambda$) is convex,
\,$\gamma$\,  is  contained in \,${\rm cl}\,\Lambda.$\, Moreover, the local convexity of
\,$f$\, implies that \,$f(x_0)$\, is the only point of \,$\gamma$\, in \,${\rm cl}\,\Lambda-\Lambda.$\,

Given \,$x\in M,$\,
consider  a sequence \,$\gamma(s_k)$\, on \,$\gamma$\, such that \,$s_k\rightarrow +\infty,$\, and let
\,$\sigma_k$\, be the unit speed geodesic from \,$f(x)$\, to \,$\gamma(s_k).$\,
As we know,  this  sequence of geodesics converges to a  geodesic ray
\,$\sigma_x$\, which  is
asymptotic to \,$\gamma.$\, Just as \,$\gamma,$\, \,$\sigma_x$\, is contained in \,${\rm cl}\,\Lambda,$\, and
\,$f(x)$\, is the only point of \,$\sigma_x$\, in \,${\rm cl}\,\Lambda-\Lambda$\, (Fig. \ref{HnXR-fig2}).

Notice that \,$\sigma_x$\, is not a horizontal geodesic, for
the horizontal sections of \,$f$\, are compact. Thus, the complete geodesic that contains \,$\sigma_x$\, is transversal to all horizontal sections of
\,$\har.$\, In particular, the geodesic ray \,$-\sigma_x$\, starting at \,$f(x)$\, in the direction
\,$-\sigma_x'(0)$\,  reaches \,$\ha_0$\, at some point \,$q=q(x),$\, which implies that
\,$f(M)$\, is a  geodesic graph over the set \,$U=\{q(x), \, x\in M\}\subset\ha_0.$\,
Moreover, since the exponential map and  the field
\,$x\in M \mapsto \sigma_x'(0)\in T_{f(x)}(\har)$\, are continuous (see Section \ref{subsec-asymptoticrays}), the
map \,$x\in M\mapsto q(x)\in U$\, is clearly a homeomorphism. In particular,
\,$U$\, is connected and open in  \,$\ha_0$.

Henceforth, we will assume that \,$I$\, is bounded, that is,
\,$t^*=\sup I<\infty.$\, Under this hypothesis, suppose that
(ii) holds and let \,$x_1\in\partial\Omega$\, be a critical point of \,$\xi.$\, Then,
\,$\xi(x_1)=t^*$\, and \,$N(x_1)=\pm \partial_t.$\, However,  \,$N(x_1)= \partial_t$\, would give that \,$x_1$\, is a strict
local minimum for \,$\xi.$\,  In that case, there would exist a neighborhood \,$V$\, of \,$x_1$\, in \,$M$\,
such that \,$\xi|_{V-\{x_1\}}>t^*,$\,
contradicting the fact that \,$x_1$\, is on the boundary of \,$\Omega.$\,
Thus, \,$N(x_1)=- \partial_t$\, and
\,$x_1$\, is a strict local maximum of \,$\xi.$\, In particular,  \,$f(x_1)$\, is  isolated in \,$\ha_{t^*}.$
Consequently, in the occurrence of (ii),
\,$M$\, coincides with the closure of \,$\Omega$\, and is, in particular, compact.
Therefore, \,$\xi$\, is a Morse function on \,$M$\, with only two critical points, which
implies that \,$M$\, is homeomorphic to a sphere (see \cite[Theorem 4.1]{milnor}).
The proofs that \,$f$\, is an embedding and that \,$f(M)$\, bounds a convex set
in \,$\mathscr{H}^n\times\R$\, are the same as in case (i) (these facts also follow from  Alexander Theorem).

Finally, we shall prove that  (iii) is impossible.       
Assume, to the contrary, that  (iii) holds.
In this case, \,$M_{t^*}:=\partial\Omega\subset\xi^{-1}(t^*)$\, is a connected $(n-1)$-dimensional submanifold
of \,$M$\, which arises as  the limit set of \,$M_t$\, as \,$t\rightarrow t^*.$\,
Thus, since \,$f|_{M_t}$\, is a proper embedding for all \,$t\in (0,t^*),$\, the same is true for
\,$f|_{{\rm cl}\,\Omega}:{\rm cl}\,\Omega\rightarrow\har.$\,
Furthermore,  if we set \,$\Lambda=\bigcup\Lambda_t\,, \,\, t\in (0,t^*),$\,
we have that \,${\rm cl}\,\Lambda$\, is convex.

Suppose that \,${\rm cl}\,\Lambda$\, is unbounded in \,$\har.$\, Then, there exists a divergent sequence
\,$p_k\in {\rm cl}\,\Lambda.$\,  
As we did before, from this sequence we  obtain
a geodesic ray \,$\gamma$\, emanating from \,$f(x_0)$\, which is  contained in
\,${\rm cl}\,\Lambda.$\, Hence, it  must be horizontal. Again, this contradicts that the only point
of \,${\rm cl}\,\Lambda$\, on \,$\ha_0$\,  is \,$f(x_0).$\,
Consequently, \,${\rm cl}\,\Lambda$\, is bounded and, therefore, compact.

Since \,$f|_{{\rm cl}\,\Omega}$\, is a proper embedding and
\,${\rm cl}\,\Lambda$\,  is compact, we have that
\,$M_{t^*}=\partial\Omega$\, is compact.
Hence, for a given \,$t\in (0, t^*),$\, the flow of \,$\g\xi$\, from \,$M_t$\,
to \,$M_{t^*}$\, is a homeomorphism (see  \cite[Theorem 3.1]{milnor}).
Then, by following the trajectories of \,$\g\xi$\, through  \,$M_{t^*}$\,, one can arrive at
a normal region \,$M_{t'}$\, for a sufficiently small \,$t'>t^*,$\, which is  a contradiction.
This shows the impossibility of (iii) and  finishes
the proof of  assertions  (a) and  (b) of the theorem.

To prove (c), let us consider a hypersurface \,$g:M^n\rightarrow\h^n\times\R$\,
in \,$\mathscr{C}_{\rm ext}(f).$\,
Since \,$\alpha_f$\, is positive definite, we have that its shape operator has
rank \,$n\ge 3$\, everywhere. So,
from Lemma \ref{lem-rigidity}, with respect to a suitable normal field \,$N_g\in TM_g^\perp,$\,
one has \,$A_f=A_g$\,, \,$\|\g\xi_f\|=\|\g\xi_g\|,$\, and \,$\theta_f^2=\theta_g^2.$\,
In particular,
the set of critical points of \,$\xi_f$\, and \,$\xi_g$\, coincide and, then,
\,$f$\, shares with \,$g$\, all the properties stated in (a) and (b).

Now, set \,$A:=A_f=A_g$\,, let \,$\theta$\, be either
\,$\theta_f$\, or \,$\theta_g$\,, and let
\,$\varphi:\R\rightarrow M$\,
be a trajectory of either \,$\g\xi_f$\, or \,$\g\xi_g$\,.  Then, by \eqref{eq-gradtheta},
\begin{equation}\label{eq-thetadedreasing}
\frac{d}{ds}\theta(\varphi(s))=\langle\g\theta(\varphi(s)),\varphi'(s)\rangle=-\langle A\varphi'(s),\varphi'(s)\rangle <0,
\end{equation}
that is, the angle functions \,$\theta_f$\, and \,$\theta_g$\, are both decreasing along
the trajectories of  \,$\g\xi_f$\, and \,$\g\xi_g$\,, respectively.
Also, differentiating the equality   \,$\theta_f^2=\theta_g^2$\,
and using \eqref{eq-gradtheta}, we easily conclude that
\[\theta_f\,\g\xi_f=\theta_g\,\g\xi_g\,.\]

After a possible reflection about an horizontal section of \,$\h^n\times\R$\, (which is an isometry),
we can assume that \,$x_0\in M$\, is a minimum point of \,$\xi_g$\,. In this case, one has
\,$\theta_f(x_0)=\theta_g(x_0)=1.$\, Since \,$\theta_f=\pm\theta_g$\,, by continuity,
\,$\theta_f=\theta_g$\, in a neighborhood  $V$ of \,$x_0$\,, which gives that
\,$\g\xi_f=\g\xi_g$\, on \,$V.$\,
Hence, on \,$V,$\, the trajectories of \,$\g\xi_f$\, and \,$\g\xi_g$\, coincide.
However,  \,$\theta_f\,$\, and \,$\theta_g$\, are both decreasing along these trajectories. Thus,
the identity \,$\theta_f=\theta_g$\,, and so \,$\g\xi_f=\g\xi_g$\,, extends to all of \,$M.$

It follows that the equalities
\[
A_f=A_g\,, \quad \theta_f=\theta_g \quad\text{and}\quad \g\xi_f=\g\xi_g
\]
hold everywhere in \,$M.$\, Therefore,  by  Daniel Theorem \cite{daniel},
there exists an isometry
\[\Phi:\h^n\times\R\rightarrow\h^n\times\R\]
such that
\,$g=\Phi\circ f.$\, This shows (c) and concludes the proof of the theorem.
\end{proof}

\begin{proof}[Proof of Theorem \ref{th-th2}]
We just sketch the proof, since the argument is similar to
the one in the proof of Theorem \ref{th-th1}.
By Lemma \ref{lem-transversal}, the horizontal sections of \,$f$\, have positive definite second fundamental form.
Considering do Carmo\,--Warner Theorem and  keeping the notation of the proof of Theorem \ref{th-th1},
one has that all normal sections of \,$f$\, are compact and embedded.
Thus, if (i) occurs, \,$M$\, is homeomorphic to \,$\R^n$\, and \,$f$\, is properly embedded. From this last property,
\,$f(M)$\, is not contained in the compact region bounded by two horizontal sections of \,$\s^n\times\R,$ which implies that the
the height function of \,$f$\, is unbounded.
The possibility (ii), analogously,
gives that \,$f$\, is properly embedded and that \,$M$\, is homeomorphic to \,$\s^n.$\,
The possibility (iii) is easily ruled out, for
the horizontal sections of \,$\s^n\times\R$\, are necessarily compact. This proves (a).
Regarding (b), we have just to consider Lemma \ref{lem-rigidity}, and remember that Daniel Theorem \cite{daniel}
is set in \,$\h^n\times\R$\, and \,$\s^n\times\R$\, as well.
\end{proof}


\begin{proof}[Proof of Corollary \ref{cor-jellett-liebmann}]
Since \,$M$\, is compact, the height function of \,$f$\, has a critical point. Therefore, from Theorems \ref{th-th1} and \ref{th-th2}, \,$f$\, is an embedding
and \,$M$\, is homeomorphic to \,$\s^n.$\, Thus, for \,$\epsilon =-1,$\, the main theorems in
\cite{hsiang}  give that \,$f$\, is congruent to a rotational  sphere of positive constant mean curvature.

Let us consider now the case \,$\epsilon=1.$\, In this setting, as discussed in Section \ref{subsec-rotationalspheres},
we can perform Alexandrov reflection with respect to horizontal sections
\,$S_t=\s^n\times\{t\}$\, to conclude that, for some \,$t^*\in\R,$\,
\,$f(M)$\, is a bigraph over  its projection \,$\pi(f(M))$\, to \,$S_{t^*}$\,. Thus, writing
\,$f_{t^*}:M_{t^*}\rightarrow S_{t^*}$\, for the  horizontal section of \,$f$\,  at \,$t^*$\,, we have that
\,$f(M_{t^*})$\, is the boundary of \,$\pi(f(M)).$\,

By Lemma \ref{lem-transversal}, \,$f_{t^*}$\, has positive extrinsic
curvature. In particular, it is non totally geodesic. Thus,
by do Carmo\,--Warner Theorem, \,$f(M_{t^*})$\, is contained in an open hemisphere \,$S_{t^*}^+$\,
of \,$S_{t^*}$\,. So, the same is true for \,$\pi(f(M)),$\,
that is, \,$f(M)\subset S_{t^*}^+\times\R.$\,
As also discussed in Section \ref{subsec-rotationalspheres},
this implies that we can perform Alexandrov reflections on
\,$f(M)$\, and conclude that it
is congruent to a rotational sphere of positive
constant mean curvature.
\end{proof}

\begin{proof}[Proof of Corollary \ref{cor-hilbert-liebmann}]
Since \,$c>(1+\epsilon)/2\ge 0,$\,  by Myers Theorem,
\,$M$\,  is compact. Also, it is easily seen that the
maximum value of the sectional curvature in \,$\q_\epsilon^n\times\R$\, is
\,$(1+\epsilon)/2.$\, This, together with  Gauss equation and  the condition on \,$c,$\,
implies that \,$f$\, has positive definite second fundamental form, if properly oriented.
Thus, Theorems \ref{th-th1} and \ref{th-th2}  apply and give that
\,$f$\, is an isometric embedding of the standard sphere
\,$S_c^n$\, of constant sectional curvature \,$c$\, into \,$\q_\epsilon^n\times\R.$\,
In addition, as seen in the proofs of these theorems, the height function \,$\xi$\, of \,$f$\, has exactly
two critical points; a minimum \,$x_0$\, and a maximum \,$x_1$\,. As before, assume
\,$\xi(x_0)=0$\, and observe that \,$\theta(x_0)=1$\, and \,$\theta(x_1)=-1.$

By \cite[Lemma 3.1]{manfio-tojeiro} (see also \cite[Lemma 5]{lps}),
$\g\xi$\, is an eigenvector of the shape operator \,$A$\,  on \,$M-\{x_0\,,x_1\}.$\, Since
tangent vectors of horizontal sections \,$f_t:M_t\rightarrow\q_\epsilon^n\times\{t\}$\,
are orthogonal to \,$\g\xi,$\, it follows from \eqref{eq-gradtheta} that the angle function
\,$\theta$\, of \,$f$\, is constant along the horizontal sections \,$f_t$\,.

\begin{figure}[htbp]
\includegraphics{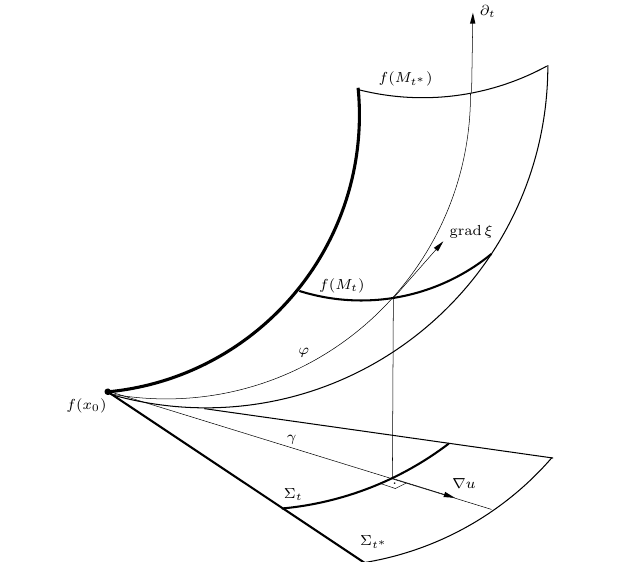}
\caption{A piece of the rotational graph $\Sigma$}
\label{fig-rotational}
\end{figure}

Notice that the trajectories of \,$\g\xi$\, cover \,$M-\{x_0\,,x_1\}$\,, and any of them issues  from \,$x_0$\, and
goes into \,$x_1$\, (cf. Section \ref{subsec-hypersurfaces}). Moreover,
along these trajectories,  \,$\theta$\, decreases from \,$1$\, to \,$-1$\, (see \eqref{eq-thetadedreasing}).
Therefore, for some \,$t^*\in (0,\xi(x_1)),$\, the angle function \,$\theta$\, vanishes on \,$M_{t^*}$\,, and is
positive on  \,$M_{t}$\, for \,$t\in(0, t^*).$\, In particular, the set
$\Sigma$\,  of all points of \,$f(M)$\,
at height less than \,$t^*$\,  is a vertical graph of a differentiable function \,$u$\, on the
projection \,$\mathcal D:=\pi(\Sigma)\subset\q_\epsilon^n.$\, It should also be noticed that
the level hypersurfaces \,$\Sigma_t:=u^{-1}(t)\subset\mathcal D,$\,  \,$t\in(0, t^*),$\, are all topological $(n-1)$-spheres.

Since \,$\theta$\, is constant along the horizontal sections of \,$f,$\, it follows
from  \eqref{eq-thetaandu} that \,$\|\nabla u\|$\, is constant along the level spheres \,$\Sigma_t$\,.
Hence, any trajectory \,$\gamma$\, of \,$\nabla u$\, is actually
a geodesic of \,$\q_\epsilon^n$ (cf. \cite[Lemma 1]{tojeiro}). 
However, as we have discussed before,  such a geodesic \,$\gamma$\, is nothing but the (reparametrized)
projection of a trajectory of $\g\xi$\, to \,$\mathcal D.$\, So, any \,$\gamma$\, is a  geodesic
of \,$\q_\epsilon^n$\, emanating from \,$f(x_0)$\,
and  orthogonal to all level spheres \,$\Sigma_t$  (Fig. \ref{fig-rotational}).

We conclude from this last statement that,
for all \,$t\in (0,t^*],$\, \,$\Sigma_t$\, is a round
geodesic sphere of \,$\q_\epsilon^n$\, centered  at \,$f(x_0).$
Thus,  \,$\mathcal D$\, is the open  ball of
\,$\q_\epsilon^n$\, centered at \,$f(x_0)$\, and bounded by \,$\Sigma_{t^*}$\,, and
the graph \,$\Sigma\subset f(M)$\, is rotational with
axis \,$f(x_0)\times\R$\, and boundary \,$f(M_{t^*}).$\,

An analogous reasoning applied
to \,$\Sigma'=f(M)-{\rm cl}\,\Sigma$\, leads to the conclusion that \,$\Sigma'$\, is rotational with
axis \,$f(x_1)\times\R$\, and boundary \,$f(M_{t^*}).$\, Since \,$\Sigma$\, and \,$\Sigma'$\, are both rotational
and have the geodesic $(n-1)$-sphere \,$f(M_{t^*})$\, as their common boundary,
their axes must coincide. Hence,
\,$f(M)={\rm cl}\,\Sigma\cup{\rm cl}\,\Sigma'$\,
is indeed a rotational sphere, as we wished to prove.
\end{proof}

Recall that, as proved by Cheeger and Gromoll \cite{cheeger-gromoll}, any complete and noncompact  Riemannian manifold
\,$M$\, with nonnegative sectional curvature has a compact submanifold, called the \emph{soul} of \,$M,$\, whose normal
bundle in \,$M$\, is diffeomorphic to \,$M.$\, If \,$M$\, has at least one  point at which all sectional curvatures are positive,
then the soul of \,$M$\, is a single point (and then \,$M$\, is homeomorphic to \,$\R^n$). This fact, conjectured by
Cheeger and Gromoll, was proved by G. Perelman in \cite{perelman}, and called the \emph{Soul Theorem.}
In what follows, we shall give a proof of Theorem \ref{th-nocriticalpoints} in which the Soul Theorem
plays a crucial role. 

\begin{proof}[Proof of Theorem \ref{th-nocriticalpoints}]
Since  \,$B_t:=B\times\{t\}$\,
and \,$S_t:=\s^n\times\{t\}$\, are both compact,
the height function \,$\xi$\, is unbounded above and below on \,$M,$\,
for \,$f$\, is proper  and  \,$\xi$\,  has no critical points.
Thus, setting \,$\overbar M_t$\, for \,$B_t$\, or \,$S_t$\,,
one has \,$M=\bigcup M_t$\,, \,$t\in\R,$\,
where \,$f_t:M_t\subset M\rightarrow\overbar M_t$\, is a family of
compact and connected horizontal sections of \,$f.$

By Lemma \ref{lem-transversal}, the sections \,$f_t$\,
have positive semi-definite second fundamental form. Hence,
Alexander and do Carmo\,--Warner Theorems apply  and give that, for all \,$t\in\R,$\,
\,$M_t$\, is  homeomorphic to \,$\s^{n-1},$\, \,$f_t$\, is an embedding,
and \,$f(M_t)$\, bounds a convex set in \,$\overbar M_t$\,. Therefore,
\,$f$\, is  an embedding and \,$M$\, is homeomorphic to the product
\,$\s^{n-1}\times\R.$

Suppose that \,$\overbar M=\ha^n.$\, In this case, since we are assuming
the second fundamental form of \,$f$\, positive semi-definite, we can apply
Bishop Theorem as in the proof of Theorem \ref{th-th1}, and equally conclude that
the mean convex side \,$\Lambda\subset B\times\R$\, of \,$f$\, is convex (we remark that
Bishop Theorem only requires semi-definiteness of the second fundamental form). Hence,
given  \,$t\in\R,$ if we  choose
\,$x_0\in M_t$\, and a divergent sequence \,$(x_k)$\, in \,$M$\, such that
\,$\xi (x_k)\rightarrow\pm\infty,$\, each geodesic segment \,$\gamma_k$\,
of \,$\ha^n\times\R$\, from
\,$f(x_0)$\, to \,$f(x_k)$\, is contained in \,${\rm cl}\,\Lambda.$\, Consequently,
the limit geodesic ray \,$\gamma=\lim\gamma_k$\, emanating from \,$f(x_0)$\,
is contained in \,${\rm cl}\,\Lambda$\, as well. If \,$\gamma$\, were not a vertical geodesic ray, it would
eventually reach the boundary of \,$B\times\R,$\, which is impossible, since \,${\rm cl}\,\Lambda\subset B\times\R.$\,
So, \,$\gamma$\, is  vertical and tangent to \,$f$\, at \,$x_0$\,, that is,
\,$T_{x_0}M$\, is vertical. Being both
\,$t$\, and \,$x_0$\,  arbitrary, it follows that \,$f(M)=f(M_0)\times\R.$

Let us assume now that \,$\overbar M=\s^n.$\, Under this assumption,
the sectional curvature \,$K$\, of \,$M$\, is nonnegative,  for
\,$\s^n\times\R$\, has nonnegative sectional curvature and, by the hypothesis,
\,$f$\, has nonnegative extrinsic curvature.
Since \,$M$\, is noncompact, it implies that,
for all \,$x\in M,$\, there exist orthonormal vectors \,$X, Y\in T_xM$\, satisfying \,$K(X,Y)=0$\, (otherwise,
by the  Soul Theorem, \,$M$\, would be homeomorphic to \,$\R^n$).
However, from implication \eqref{eq-det} and equality \eqref{eq-daniel},
\[
\|\pi_{\scriptscriptstyle{XY}}\,\g\xi\|^2=1+\det A_{\scriptscriptstyle{XY}}\ge 1.
\]
Thus,  \,$\|\pi_{\scriptscriptstyle{XY}}\,\g\xi(x)\|=1,$\, i.e.,
\,$\|\g\xi(x)\|=1,$\, and so
\[\g\xi(x)=\partial_t \,\,\, \forall x\in M,\]
which clearly implies that \,$f(M)=f(M_0)\times\R.$\,
This finishes the proof.
\end{proof}

\begin{remark} \label{rem-neessary}
The assumption (b) in Theorem \ref{th-nocriticalpoints} is necessary.
Indeed, as noted in  \cite[pg. 124]{spivak},
there exists a proper unbounded immersion \,$g:M^{n-1}\rightarrow\h^n$\, ($n\ge 3$)
with positive definite second fundamental form, which is not an
embedding. Therefore, on one hand, the immersion \,$f:=g\times{\rm id}\,:M^{n-1}\times\R\rightarrow\h^n\times\R$\,
is proper, has positive semi-definite second fundamental form, and its height function has no critical points.
On the other hand, \,$f$\,  is not an embedding. (Notice that \,$f$\,  is non cylindrically bounded, for \,$g$\, is unbounded.)
\end{remark}

\section{Results on Hypersurfaces in \,$\R\times_{\rho}\ha^n$\, and \,$\R\times_\rho\s^n$} \label{sec-th2}

Given an $n$-dimensional  Riemannian manifold \,$\overbar M^n,$
and a positive differentiable function \,$\varrho:\R\rightarrow\R,$\,
the \emph{warped product} \,$\R\times_\varrho\overbar{M}^n$ is, by definition,
the manifold \,$\R\times\overbar M^n$ endowed
with the metric
\[
\langle X,Y\rangle=\langle X_h,Y_h\rangle_{\R} + \varrho^2(t)\langle X_v,Y_v\rangle_{\overbar{M}},
\,\, X, Y\in T_{(t,p)}(\R\times\overbar{M}), \,\, (t,p)\in \R\times\overbar{M}.
\]
Here \,$\langle \,,\, \rangle_\R$\, and \,$\langle \,,\, \rangle_{\overbar{M}}$\, denote the Riemannian metrics of  \,$\R$\,
and \,$\overbar M^n,$\, respectively,
and the notation is as in Section \ref{sec-preliminaries} (notice that, now, horizontal vectors are tangent to \,$\R,$\, whereas vertical vectors are
tangent to \,$\overbar M$).

It is easily seen that, in   \,$\R\times_\varrho\overbar M^n,$  all \emph{vertical sections}
\[\overbar{M}_t:=\{t\}\times_\varrho\overbar M^n\]
are  homothetic  to \,$\overbar M.$\,  
In particular, the following hold:

\begin{itemize}[parsep=1ex]

\item 
Vertical sections are Hadamard manifolds (resp. spheres with constant sectional curvature)
if \,$\overbar M$\, is a Hadamard manifold (resp. \,$\s^n$).

\item 
The Riemannian connection of any vertical section, to be denoted by \,$\overbar{\nabla},$\,
can be identified with that of \,$\overbar M$\, (see \cite[Lema 64, pg. 92]{oneill}).
\end{itemize}

Denoting by  \,$\widetilde\nabla$\, the Riemannian connection of \,$\R\times_\rho\overbar M^n$\, and defining
\[
\phi(t):=\frac{\rho'(t)}{\rho(t)}\,, \,\,\, t\in\R,
\]
for any  \emph{vertical} fields \,$X, Y\in T\overbar{M},$\,
the following identities hold
(see \cite[Lema 7.3]{bishop-oneill}):
\begin{equation}\label{eq-connectionwarped}
  \begin{aligned}
    \widetilde\nabla_XY &= \overbar\nabla_XY-\phi\langle X,Y\rangle\partial_t\,.\\
    \widetilde{\nabla}_X\partial_t &=\widetilde{\nabla}_{\partial_t}X=\phi X.\\
    \widetilde{\nabla}_{\partial_t}\partial_t &= 0.
  \end{aligned}
\end{equation}

Let us introduce now the concept of $\phi$-convexity of hypersurfaces in warped products \,$\R\times_\rho\overbar{M}.$\,
As we pointed out in the introduction, when \,$\rho$\, is constant, $\phi$-convexity
is equivalent to positive semi-definiteness of the second fundamental form.

\begin{definition} \label{def-phiconvex}
An oriented hypersurface \,$f:M^n\rightarrow\R\times_\rho\overbar{M}$\, is called
$\phi$-\emph{convex} (resp. \emph{strictly} $\phi$-\emph{convex}) if, for all \,$x\in M,$\,
each eigenvalue \,$\lambda$\, of its shape operator at \,$x$\,  satisfies
\,$\lambda\ge|\phi\circ\xi(x)|$\, (resp. \,$\lambda>|\phi\circ\xi(x)|$).
\end{definition}

Vertical sections are trivial examples of $\phi$-convex hypersurfaces of \,$\R\times_\rho\overbar{M}^n.$\,
In the case where \,$\overbar M$\, is a simply connected space form,
they are the only $\phi$-convex hypersurfaces
under certain restrictions  on the warp function \,$\rho$\, and the immersed manifold \,$M$\,
(see Proposition \ref{prop-warpverticalsection} at the end of this section).
In hyperbolic space \,$\h^{n+1}=\R\times_{e^{t}}\R^n,$\, as we have discussed,
the vertical sections are the constant mean curvature one horospheres. In particular, the geodesic spheres of \,$\h^{n+1}$
are all $\phi$-convex, since they are totally umbilical and have constant mean curvature greater than one.

Concerning oriented hypersurfaces \,$f:M^n\rightarrow\R\times_\rho\overbar{M}^n,$\,
we shall keep the notation of the previous sections.
Namely, for such an \,$f,$ \,$N$\, will denote its unit normal field,  \,$A$\, its shape operator,
\,$\theta=\langle N,\partial_t\rangle$\, its angle function, and \,$\xi=\pi_{\scriptscriptstyle\R}\circ f$\,
its height function. In particular, as before,
\,$\g\xi=\partial_t-\theta N.$

\begin{remark}
To avoid excessive notation, we will  write \,$\rho$\, and \,$\phi$\, for the compositions
\,$\rho\circ\xi$\, and \,$\phi\circ\xi,$\, respectively, since there is no danger of confusion.
\end{remark}

From the equalities \eqref{eq-connectionwarped}, for all \,$X\in T(\R\times_\rho\overbar{M}),$\, one  has
\begin{equation} \label{eq-connectionwarp2}
\widetilde\nabla_X\partial_t=\widetilde\nabla_{X_v}\partial_t=\phi X_v=\phi\left(X-\langle X,\partial_t\rangle\partial_t\right).
\end{equation}
Thus, if \,$X\in TM,$\,
\[
X(\theta)=\langle\widetilde\nabla_XN,\partial_t\rangle+\langle N,\widetilde\nabla_X\partial_t\rangle=-\langle A\,\g\xi,X\rangle-\phi\theta\langle\g\xi,X\rangle.
\]
Hence, the gradient of \,$\theta$\, is
\begin{equation} \label{eq-gradthetawarp}
\g\theta=-(A+\phi\theta\,{\rm Id})\,\g\xi,
\end{equation}
where \,${\rm Id}$\, stands for the identity map of \,$TM.$

Given \,$X,Y\in TM,$\, we have that
\begin{eqnarray}
{\rm Hess}\,\xi(X,Y) & = & \langle\widetilde\nabla_{X}\,\g\xi,Y\rangle=\langle\widetilde\nabla_{X_v}\partial_t,Y\rangle-\theta\langle\widetilde\nabla_{X}N,Y\rangle\nonumber\\
          & = & \phi\langle X_v\,, Y\rangle + \theta\langle\alpha(X,Y),N\rangle\nonumber\\
          & = &  \phi(\langle X,Y\rangle- \langle X,\partial_t\rangle \langle Y,\partial_t\rangle)+\theta\langle\alpha(X,Y),N\rangle. \nonumber
\end{eqnarray}
In particular,
\begin{equation}  \label{eq-laplacianwarped}
{\rm Hess}\,\xi(X,X)=\phi(\langle X,X\rangle-\langle X,\partial_t\rangle^2)+\theta\langle\alpha(X,X),N\rangle \,\,\, \forall X\in TM.
\end{equation}

We also call attention to the fact that, defining the L-operator
\[{\rm L}(\rho):=(\rho')^2 -\rho\rho'',\]
we have from  Gauss equation for hypersurfaces \,$f:M^n\rightarrow\R\times_{\rho}\q^n_\epsilon$\,
(see, e.g.,  \cite[Proposition 3]{lawn-ortega})
that, for all orthonormal tangent fields \,$X, Y\in TM,$\, the sectional curvature \,$K$\, of \,$M$\,  satisfies
\begin{equation}  \label{eq-gausswarp}
K(X,Y)=\left(\frac{\epsilon}{\rho^2}-\phi^2\right)+
\left(\frac{{\rm L}(\rho)-\epsilon}{\rho^2}\right)\|\pi_{\scriptscriptstyle{XY}}\,\g\xi\|^2+
\det A_{\scriptscriptstyle{XY}}.
\end{equation}

We proceed now to the proofs of Theorems \ref{th-warp} and \ref{th-warpnocriticalpoints}. First, we establish the following Lemma, which
can be considered as a ``warped'' version of Lemma \ref{lem-transversal}.

\begin{lemma} \label{lem-warptransversal}
Let  \,$f:M^n\rightarrow\R\times_\rho\overbar{M}^n$\, ($n\ge 3$) be a
$\phi$-convex (resp. strictly $\phi$-convex) hypersurface.
Then, for all \,$X\in TM,$\, one has
\begin{equation}\label{eq-warpsff}
\langle\alpha(X,X),N\rangle\pm\phi\theta\langle X,X\rangle\ge 0 \,\, \, ({\rm resp.}\,\,>0).
\end{equation}
Consequently, any vertical section
$f_t:M_t\rightarrow\overbar M_t$ of  \,$f$, if properly oriented,
has positive semi-definite (resp. definite) second fundamental form.
\end{lemma}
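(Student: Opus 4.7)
The plan is to prove \eqref{eq-warpsff} directly from the definition of $\phi$-convexity, and then to deduce the vertical-section statement through a computation that mirrors the proof of Lemma \ref{lem-transversal} while incorporating the warping correction coming from \eqref{eq-connectionwarped}.

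For \eqref{eq-warpsff}, the key observation is that $\phi$-convexity forces $\langle AX,X\rangle \ge |\phi|\langle X,X\rangle$ for every $X\in TM$, while $|\theta|\le 1$ (since $N$ and $\partial_t$ are unit vectors) forces $\pm\phi\theta \le |\phi\theta| \le |\phi|$. Chaining these two inequalities yields
\[
\langle\alpha(X,X),N\rangle = \langle AX,X\rangle \ge |\phi|\langle X,X\rangle \ge \pm\phi\theta\,\langle X,X\rangle,
\]
which is \eqref{eq-warpsff}; the strict case is identical with the first inequality strict.

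For the vertical sections, let $f_t:M_t\to\overbar{M}_t$ be a vertical section and fix $X\in T_xM_t$. The conditions $X(\xi)=0$, $\langle X,N\rangle=0$, together with $\g\xi=\partial_t-\theta N$, force $\langle X,\partial_t\rangle=0$; i.e., $X$ is vertical in the warped-product sense. Orient $f_t$ by $\eta:=(N-\theta\partial_t)/\sqrt{1-\theta^2}$, which is a unit vector in $T\overbar{M}_t$. Applying the first identity of \eqref{eq-connectionwarped} to a local vertical extension of $X$ gives
\[
\overbar\nabla_X X = \widetilde\nabla_X X + \phi\langle X,X\rangle\partial_t,
\]
and since $\langle\partial_t,\eta\rangle=0$ by construction, this reduces $\langle\alpha_{f_t}(X,X),\eta\rangle$ to $\langle\widetilde\nabla_X X,\eta\rangle$. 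Expanding $\eta$, using $\langle\widetilde\nabla_X X,N\rangle=\langle\alpha_f(X,X),N\rangle$, and computing $\langle\widetilde\nabla_X X,\partial_t\rangle=-\phi\langle X,X\rangle$ via metric compatibility combined with $\widetilde\nabla_X\partial_t=\phi X$ from \eqref{eq-connectionwarp2}, one obtains
\[
\sqrt{1-\theta^2}\,\langle\alpha_{f_t}(X,X),\eta\rangle = \langle\alpha_f(X,X),N\rangle + \phi\theta\,\langle X,X\rangle,
\]
which is $\ge 0$ (strict in the strict case) by \eqref{eq-warpsff}. Hence $\alpha_{f_t}$ is positive semi-definite with respect to $\eta$, the ``proper orientation'' of the statement.

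The delicate point is the evaluation of $\langle\widetilde\nabla_X X,\partial_t\rangle$: one must extend $X$ to a \emph{vertical} vector field near the point, so that $\langle X,\partial_t\rangle\equiv 0$ holds on an entire neighborhood rather than at a single point, before metric compatibility together with \eqref{eq-connectionwarp2} can be applied. Once that extension is in place the identity is immediate. The resulting term $\phi\theta\langle X,X\rangle$ is precisely the warping correction that is absent in Lemma \ref{lem-transversal} and that makes \eqref{eq-warpsff} the natural hypothesis; when $\phi\equiv 0$ it vanishes and the argument collapses to the one of Lemma \ref{lem-transversal}.
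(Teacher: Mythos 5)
Your proof is correct and follows essentially the same route as the paper: the inequality \eqref{eq-warpsff} is obtained from the eigenvalue bound $\lambda_i\ge|\phi|$ together with $|\theta|\le 1$, and the vertical-section claim from computing $\langle\alpha_{f_t}(X,X),\eta\rangle$ with $\eta=(N-\theta\partial_t)/\sqrt{1-\theta^2}$ via the warped connection identities \eqref{eq-connectionwarped}. The only (immaterial) difference is organizational: the paper drops the $\partial_t$-component of $\eta$ at once because $\overbar\nabla_XX$ is tangent to the vertical section, whereas you apply the connection formula first and then evaluate $\langle\widetilde\nabla_XX,\partial_t\rangle$ by metric compatibility, arriving at the same identity.
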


\begin{proof}
Consider an orthonormal frame \,$\{X_1\,,\dots ,X_n\}\subset TM$\, of eigenvectors
of the shape operator \,$A$\, of \,$f$\, with corresponding eigenvalues
\,$\lambda_1\,, \dots ,\lambda_n$\,.  The $\phi$\, convexity of \,$f$\, yields \,$\lambda_i\ge|\phi|$\,
for all  \,$i=1,\dots, n.$\, Since
\,$-1\le\theta\le 1,$\, by setting  \,$X=a_1X_1+\cdots+a_nX_n$\,, one has
\[
\langle\alpha(X,X),N\rangle=\langle AX,X\rangle=\sum_{i=1}^{n}\lambda_ia_i^2\ge|\phi|\langle X,X\rangle\ge\pm\phi\theta\langle X,X\rangle,
\]
which gives \eqref{eq-warpsff}.

Let us consider now a vertical section \,$f_t:M_t\rightarrow\overbar{M}_t$\, with orientation
\begin{equation}\label{eq-etawarp}
\eta=\frac{N_v}{\|N_v\|}=\frac{1}{\sqrt{1-\theta^2}}(N-\theta\partial_t).
\end{equation}
In this case, for all \,$X\in TM_t\,,$\,
\begin{eqnarray}
\langle\alpha_{f_t}(X,X),\eta\rangle & = & \frac{1}{\sqrt{1-\theta^2}}\langle\overbar\nabla_XX,N-\theta\partial_t\rangle =
\frac{1}{\sqrt{1-\theta^2}}\langle\overbar\nabla_XX,N\rangle \nonumber\\
                                     & =  &\frac{1}{\sqrt{1-\theta^2}}(\langle\widetilde\nabla_XX,N\rangle+\phi\theta\langle X,X\rangle), \nonumber
\end{eqnarray}
where the last equality followed from the first one in   \eqref{eq-connectionwarped}.
This, together with inequality \eqref{eq-warpsff}, gives
\[
\langle\alpha_{f_t}(X,X),\eta\rangle= \frac{1}{\sqrt{1-\theta^2}}(\langle\alpha(X,X),N\rangle+\phi\theta\langle X,X\rangle)\ge 0,
\]
which implies that \,$f_t$\, has positive semi-definite second fundamental form.
If \,$f$\, is strictly $\phi$-convex, the inequality
\eqref{eq-warpsff} is strict and, then, \,$f_t$\, has positive
definite second fundamental form.
\end{proof}

\begin{proof}[Proof of Theorem \ref{th-warp}]
Let \,$x_0\in M$\, be a critical point of the height function
\,$\xi$\, of \,$f.$\,
It follows from the Hessian formula \eqref{eq-laplacianwarped} and inequality
\eqref{eq-warpsff} that  \,$x_0$\, is a strict maximum   if
\,$\theta(x_0)=-1,$\, or is a strict minimum if \,$\theta(x_0)=1.$\, Let us assume
the latter, and also that \,$\xi(x_0)=0.$

Define normal sections, normal regions and normal values for \,$x_0$\, as in the proof of Theorem \ref{th-th1}.
As before, denote by  \,$\Omega\subset M$\, the union of all normal regions,  and by \,$I\subset (0,+\infty)$\, the interval
of all normal values. Recall that \,$\Omega$ is
homeomorphic to \,$\R^n$\, and consider the cases:

\begin{itemize}[parsep=1ex]
  \item[i)] $\Omega=M.$
  \item [ii)]$\Omega\ne M$\, and \,$\partial\Omega$\, contains critical points of \,$\xi.$
  \item [iii)] $\Omega\ne M$\, and \,$\partial\Omega$\, contains no critical points of \,$\xi.$
\end{itemize}

The vertical sections \,$\overbar M_t$\, are either all Hadamard manifolds
or all spheres with constant sectional curvature.
Thus,  do Carmo\,--Warner and Alexander Theorems, together with Lemma \ref{lem-warptransversal},
imply that each normal section
\,$f_t:M_t\rightarrow\overbar{M}_t$\, is an embedding.
Therefore, if (i) occurs, \,$M$\, is homeomorphic to \,$\R^n$\, and \,$f$\, is a proper embedding.
In particular, if \,$\overbar M=\s^n,$\,  the height function of
\,$f$\, is unbounded,  since the region bounded by two normal sections
of  \,$\R\times_\rho\s^n$\, is compact.

If (ii) holds, then we can argue just as in the proof of Theorem \ref{th-th1}
to conclude that \,$\xi$\, has precisely two critical points, giving that
\,$M$\,  is homeomorphic to \,$\s^n.$\,

Finally, let us assume that (iii) holds and then derive a contradiction.
Reasoning as in the proof of Theorem \ref{th-th1}, it suffices to prove that
\,$M_{t^*}:=\partial\Omega$\, is compact, where \,$t^*=\sup I.$\,

As before, we have
that \,$f_{t^*}:M_{t^*}\rightarrow\overbar M_{t^*}$\, is a proper embedding.
If \,$\overbar M^n=\s^n,$\,
then  \,$\overbar M_{t^*}$\, is  a sphere, which implies that \,$M_{t^*}$\, is compact, since  \,$f_{t^*}$\, is proper.
Hence, (iii) does not hold if \,$\overbar M^n=\s^n.$

Let us suppose now  that \,$\overbar M^n$\, is a Hadamard manifold.
Under this hypothesis, we shall  prove that the  projections 
of  \,$f(\Omega_t)$\, to \,$\overbar M_{t^*}$\, are uniformly bounded, which will imply that
\,$M_{t^*}$\, is compact. In our reasoning, we will use some ideas contained in Currier's proof of his \cite[Theorem A]{currier}.

Let \,$\pi:\R\times_\rho\overbar M^n\rightarrow\overbar M_{t^*}$\,
be the  projection onto \,$\overbar M_{t^*}.$\, Given \,$t$\, in \,$(0,t^*),$\,
set \,$\Omega_t^*=\pi(f(\Omega_t))\subset\overbar M_{t^*},$\,
consider a boundary point \,$x^*\in\partial\Omega_t^*,$\, and let \,$x\in {\rm cl}\,\Omega_t$\, be such that
\,$\pi(f(x))=x^*.$\,  Then, choose \,$t_0\in (0,t^*),$\, \,$t_0<t,$\, in such a way that
the angle function \,$\theta$\, is positive in \,${\rm cl}\,\Omega_{t_0}$\, (notice that \,$\theta(x_0)=1$).
In this setting, we have either
\,$x\in\partial\Omega_t=M_t$\, or \,$x\in\Omega_t\,.$\,
In the latter case, it is clear that the horizontal  geodesic (i.e., parallel to \,$\partial_t$)
through \,$f(x)$\, is tangent to \,$M,$\, that is, \,$\theta(x)=0.$\,
So, in any case, \,$x\not\in{\rm cl}\,\Omega_{t_0}$\,  (Fig. \ref{fig-projection}).

\begin{figure}[htbp]
\includegraphics{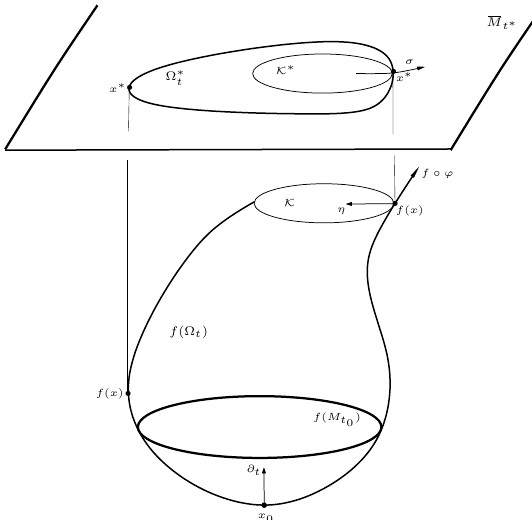}
\caption{Projection of $f(\Omega_t)$ to $\overbar M_{t^*}$}
\label{fig-projection}
\end{figure}

Let \,$\varphi:[0,T+\delta]\rightarrow M,$\, \,$\delta >0,$\, be an arclength  parametrization of the trajectory of
\,$\g\xi$\, through \,$x$\, satisfying \,$\varphi(0)\in M_{t_0}$\, and \,$\varphi(T)=x.$\,
Write
\[\varphi'(s)=a(s)\eta(\varphi(s))+b(s)\partial_t, \,\, s\in [0,T+\delta],\]
where  \,$\eta$\, is defined as in \eqref{eq-etawarp}. We remark
that \,$b$\, is a positive function, since \,$\varphi'$\, is parallel to \,$\g\xi.$\,
Also, it is easily seen that, along \,$\varphi,$\, \,$N=b\eta-a\partial_t.$\,
So,
\[a(s)=-\theta(\varphi(s)), \, s\in [0,T+\delta].\]
From this,  equality \eqref{eq-gradthetawarp}, and inequality \eqref{eq-warpsff}, we have
\begin{equation}\label{eq-thetawarpdecreasing}
a'=\langle A\,\g\xi, \varphi'\rangle+\phi\theta\langle\g\xi,\varphi'\rangle=
\|\g\xi\|\left(\langle A\,\varphi', \varphi'\rangle+\phi\theta\langle\varphi',\varphi'\rangle\right)>0,
\end{equation}
which implies that \,$a$\, is increasing.

Let us show now that $a(T)\le 0.$\, Indeed, if $x=\varphi(T)\in\Omega_t$\,, as we know,
$a(T)=-\theta(x)=0.$  So, we can assume   \,$x\in\partial\Omega_t=M_t$\, and  \,$a(T)\ne 0.$\,
In this case, denoting by \,$\mathcal{K}$\, the convex set bounded by  \,$f(M_t)$\, in \,$\overbar M_t$\,,
and setting \,$\mathcal{K}^*:=\pi(\mathcal{K})\subset\overbar M_{t^*},$\, we have that \,$x^*\in\partial\Omega_t^*\cap\partial \mathcal{K}^*.$\,
Clearly, \,$\mathcal{K}^*$\, is compact,  and \,$\pi_*\eta(x)$\, is orthogonal to \,$\partial\mathcal{K}^*,$\,
pointing inward \,$\mathcal{K}^*.$\, Furthermore, \,$f({\rm cl}\,\Omega_t)$\, separates \,$\overbar M^n\times [0,t],$\,
for  \,$f|_{{\rm cl}\,\Omega_t}$\, is an embedding. Hence, a horizontal geodesic through any point in \,$\mathcal{K}^*$\,
necessarily reaches \,$f(\Omega_t)$\,, which yields  \,$\mathcal{K}^*\subset\Omega_t^*$\, (see Fig. \ref{fig-projection}).

Consider the projection \,$\sigma(s):=\pi(f(\varphi(s)))$\, of
\,$f\circ\varphi$\, to \,$\overbar M_{t^*}.$\, Since
\,$\sigma'(T)=a(T)\pi_*\eta(x)\ne 0,$\, one has that
\,$\sigma$\, is transversal to \,$\partial\mathcal{K}^*$\, (and then to \,$\partial\Omega_t^*$)
at \,$\sigma(T)=x^*.$\,
Therefore, the part of \,$\sigma$\, inside (respectively, outside) \,$\mathcal{K}^*$\, is the  projection
of a part of \,$f\circ\varphi$\, inside (respectively, outside) \,$f(\Omega_t)$\,,
that is, \,$\sigma(s)\in \mathcal{K}^*$\, for all small \,$s<T$\, and
\,$\sigma(s)\not\in \mathcal{K}^*$\, for \,$s>T.$\,
This gives that the velocity vector \,$\sigma'(T)$\, points outward \,$\mathcal{K}^*$\, at \,$\sigma(T)=x^*,$\,
that is, \,$0>\langle\sigma'(T),\pi_*\eta(x)\rangle=a(T),$\, as claimed.

Since \,$a$\, is increasing and both \,$a(0)$\, and \,$a(T)$\, are non positive, we have that
\,$a\le 0$\,  in \,$[0,T].$\, But \,$a^2+b^2=1.$\, Hence, \,$aa'+bb'=0,$\, which implies
that \,$b'\ge 0$\, in \,$[0,T],$\, for \,$b>0.$\,
Therefore, \,$b$\, is nondecreasing. So, if we set
\[
\lambda:=\inf_{M_{t_0}}\left\langle\frac{\g\xi}{\|\g\xi\|}, \partial_t\right\rangle,
\]
we have that  \,$b=\langle\varphi',\partial_t\rangle\ge\lambda>0.$\,
Thus,
\[
t-t_0=\xi(\varphi(T))-\xi(\varphi(0))=\int_0^T(\xi\circ\varphi)'(s)ds=\int_0^T\langle\g\xi,\varphi'\rangle ds=
\int_0^Tb(s)ds\ge T\lambda,
\]
and so, the following inequalities hold:
\begin{equation}\label{eq-T}
T\le\frac{t-t_0}{\lambda}<\frac{t^*-t_0}{\lambda}\,\cdot
\end{equation}

Now, set \,$t(s):=\xi(\varphi(s))\,, \, s\in [0,T+\delta],$\, and notice that
\[
1=\|\eta(\varphi(s))\|=\rho(t(s))\|\eta(\varphi(s))\|_{\overbar M}.
\]
Since \,$\sigma'(s)=a(s)\pi_*\eta(\varphi(s))$\, and \,$\|\eta(\varphi(s))\|_{\overbar M}=\|\pi_*\eta(\varphi(s))\|_{\overbar M},$\, we  have
\[
\|\sigma'(s)\|=\rho(t^*)|a(s)|\|\eta(\varphi(s))\|_{\overbar M}=\frac{\rho(t^*)}{\rho(t(s))}|a(s)|\le\frac{\rho(t^*)}{\mu}|a(s)|,
\]
where \,$\mu=\inf\rho|_{[t_0,t^*]}.$\,

Therefore, denoting the  length  of \,$\sigma$\, from
\,$0$\, to \,$T$\, by \,$\mathcal{L}(\sigma),$\,  considering \eqref{eq-T},
and taking into account that \,$a=\sqrt{1-b^2}\le\sqrt{1-\lambda^2},$\, one has
\[
\mathcal{L}(\sigma)=\int_0^T\|\sigma'(s)\|ds\le\frac{\rho(t^*)}{\mu}\int_0^T|a(s)|ds
<\frac{\rho(t^*)(t^*-t_0)}{\lambda\mu}\sqrt{1-\lambda^2}.
\]

It follows that each point \,$x^*\in\partial\Omega_{t}^*$\, can be joined to a point of
\,$\pi(f(\Omega_{t_0}))$\, by a curve whose length is bounded by a constant independent of  \,$t,$\,
which clearly  implies that \,$\Omega_t^*=\pi(f(\Omega_t))$\,  is uniformly bounded. Consequently, \,$\pi(f(\Omega))$\, is bounded in
\,$\overbar M_{t^*}.$\, In particular,  \,$M_{t^*}$\, is bounded, and so is  compact, as we wished to show.
This fact, as we pointed out, leads to a contradiction and
then finishes the proof of the theorem.
\end{proof}

\begin{remark} \label{rem-convex}
Under the conditions of Theorem \ref{th-warp}, consider the case when \,$\overbar M^n$\, is a Hadamard
manifold \,$\ha^n,$\, and assume further that the warp function \,$\rho$\, is convex, that is,
\,$\rho''\ge 0.$\, Then, in addition to the conclusions of the theorem, one has:
\begin{itemize}[parsep=1ex]
  \item[a)] $f(M)$\, bounds a convex set in \,$\R\times_\rho\ha^n.$
  \item[b)] When \,$M$\, is homeomorphic to \,$\R^n,$
  \,$f(M)$\, is an unbounded horizontal geodesic graph over an open set of a vertical section of
  \,$\R\times_\rho\ha^n$.
\end{itemize}
Indeed, by \cite[Theorem 7.5]{bishop-oneill} (see also \cite[Remark 7.7]{bishop-oneill}), the convexity of
\,$\rho$\, implies that
\,$\R\times_\rho\mathscr{H}^n$\, is a Hadamard manifold. This fact, as can be easily seen,  allows us to
mimic the first part of the proof of Theorem \ref{th-th1} and, then, get (a) and (b).
\end{remark}

\begin{proof}[Proof of Theorem \ref{th-warpnocriticalpoints}]
As in the proof of Theorem \ref{th-nocriticalpoints}, it follows from the compacity of the vertical sections
of \,$\R\times_{\rho}\s^n,$\, the properness of \,$f,$\, and the absence of critical points of  its height function that
there exists a family of  vertical sections,
\[f_t:M_t\subset M\rightarrow S_t:=\{t\}\times_{\rho}\s^n, \, t\in\R,\]
such that \,$M=\bigcup M_t$\,, and \,$M_t$\, is compact and connected for all \,$t\in\R.$

Also, by Lemma \ref{lem-warptransversal} and
do Carmo\,--Warner Theorem, for all \,$t\in\R,$\,
\,$M_t$\, is  homeomorphic to \,$\s^{n-1},$\, \,$f_t$\, is an embedding,
and \,$f(M_t)$\, bounds a convex set in \,$S_t$\,. Thus,
the hypersurface \,$f$\, itself is  an embedding
and \,$M$\, is homeomorphic to \,$\s^{n-1}\times\R.$

Let us assume now that \,$M$\, has nonnegative sectional curvature.
Since \,$M$\, is noncompact and not homeomorphic to \,$\R^n,$\,  Perelman Soul Theorem gives that,
for all \,$x\in M,$\, there exist orthonormal vectors \,$X, Y\in T_xM$\, satisfying \,$K(X,Y)=0.$\,
Then, considering  equality \eqref{eq-gausswarp}, we  have
\begin{eqnarray}
  0 &=& \left(\frac{1}{\rho^2}-\phi^2\right)+\frac{{\rm L}(\rho)-1}{\rho^2}\|\pi_{\scriptscriptstyle{XY}}\,\g\xi\|^2+\det A_{\scriptscriptstyle{XY}}\nonumber\\
    &\ge & \frac{1}{\rho^2}+\frac{{\rm L}(\rho)-1}{\rho^2}\|\pi_{\scriptscriptstyle{XY}}\,\g\xi\|^2, \nonumber
\end{eqnarray}
for \,$\det A_{\scriptscriptstyle{XY}}\ge\phi^2,$\, by the $\phi$-convexity of \,$f$\, (see \eqref{eq-det}).

Therefore,  the inequality
\begin{equation} \label{eq-inequalitywarp}
 (1-{\rm L}(\rho))\|\pi_{\scriptscriptstyle{XY}}\,\g\xi\|^2\ge 1
\end{equation}
holds  and yields  \,${\rm L}(\rho)\le 0,$\,  since \,$\|\pi_{\scriptscriptstyle{XY}}\,\g\xi\|\le 1.$\,

From \eqref{eq-inequalitywarp}, we also have that  \,$\|\g\xi\|=1$\, if \,${\rm L}(\rho)= 0$\, on \,$\R.$\,
In this case, \,$\g\xi=\partial_t$\, on all of \,$M.$\,
In particular, \,$\langle N,\partial_t\rangle=0.$\, Differentiating this equality and considering \eqref{eq-connectionwarp2},
we easily conclude that \,$A\partial_t=0.$\, This, together with the $\phi$-convexity of \,$f,$\, implies that
\,$\phi=0$\, on \,$\R$\, and, then, that \,$\rho$\, is constant.
\end{proof}

In conclusion, we point out the following property of $\phi$-convex hypersurfaces: 

\begin{proposition} \label{prop-warpverticalsection}
{If $f:M^n\rightarrow\R\times_\rho\q_\epsilon^n$ is $\phi$-convex, $M$ has  sectional curvature $K\le\epsilon/\rho^2,$
and \,${\rm L}(\rho)>\epsilon$ on \,$M,$  then each connected component of \,$f(M)$ is contained in a vertical section of \,$\R\times_\rho\q_\epsilon^n$}.
\end{proposition}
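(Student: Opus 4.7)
The plan is to show that the height function $\xi$ is locally constant on $M$ under the given hypotheses, whence each connected component of $f(M)$ is contained in a single level set of $\pi_{\scriptscriptstyle\R}$, that is, a vertical section $\overbar{M}_t = \{t\}\times_\rho\q_\epsilon^n$. By \eqref{eq-gradxi}, it suffices to prove $\g\xi \equiv 0$ on $M$.

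The main tool is the warped Gauss equation \eqref{eq-gausswarp}. Fix $x\in M$ and an arbitrary orthonormal pair $X, Y\in T_xM$. Rearranging \eqref{eq-gausswarp} gives
\[
\phi^2 - \det A_{\scriptscriptstyle XY} \;=\; \frac{\mathrm{L}(\rho) - \epsilon}{\rho^2}\,\|\pi_{\scriptscriptstyle XY}\,\g\xi\|^2 \;+\; \left(\frac{\epsilon}{\rho^2} - K(X,Y)\right).
\]
The curvature hypothesis $K(X,Y) \le \epsilon/\rho^2$ forces the parenthesised term on the right to be nonnegative. On the other hand, by the $\phi$-convexity of $f$ every eigenvalue of $A$ at $x$ dominates $|\phi|$, so implication \eqref{eq-det} yields $\det A_{\scriptscriptstyle XY} \ge \phi^2$, that is, the left-hand side is $\le 0$. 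Combining these two observations with the strict sign condition $\mathrm{L}(\rho) > \epsilon$ (which makes the coefficient $(\mathrm{L}(\rho)-\epsilon)/\rho^2$ strictly positive) leaves
\[
0 \;\ge\; \frac{\mathrm{L}(\rho)-\epsilon}{\rho^2}\,\|\pi_{\scriptscriptstyle XY}\,\g\xi\|^2 \;\ge\; 0,
\]
so $\|\pi_{\scriptscriptstyle XY}\,\g\xi(x)\| = 0$.

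Since this vanishing holds for every orthonormal pair $X, Y \in T_xM$ and $n\ge 3$, the projection of $\g\xi(x)$ onto every $2$-plane of $T_xM$ is zero; taking any $2$-plane containing $\g\xi(x)$ yields $\g\xi(x)=0$. Because $x$ was arbitrary, $\xi$ is locally constant on $M$, so each connected component of $f(M)$ lies in a single vertical section, as claimed. I do not anticipate a substantive obstacle: the proof is an algebraic manipulation of the Gauss equation that pits $\phi$-convexity against the curvature bound, and the only point requiring care is to verify that the two-sided squeeze really forces $\g\xi$ itself (not merely some projection) to vanish, which is handled by the dimensional remark above.
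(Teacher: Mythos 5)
Your proof is correct and is essentially the paper's own argument: both combine the warped Gauss equation \eqref{eq-gausswarp} with the curvature bound $K\le\epsilon/\rho^2$ and the estimate $\det A_{\scriptscriptstyle XY}\ge\phi^2$ from $\phi$-convexity via \eqref{eq-det}, then use ${\rm L}(\rho)>\epsilon$ to squeeze $\|\pi_{\scriptscriptstyle XY}\,\g\xi\|^2$ to zero for every orthonormal pair and conclude $\g\xi\equiv 0$. Your explicit remark that one must take a $2$-plane containing $\g\xi(x)$ to pass from the vanishing of all projections to the vanishing of $\g\xi$ itself is a detail the paper leaves implicit, but the route is the same.
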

\begin{proof}
From the hypothesis on \,$K$\, and equality \eqref{eq-gausswarp}, one has
\begin{eqnarray}
  \frac{\epsilon}{\rho^2} &\ge& \left(\frac{\epsilon}{\rho^2}-\phi^2\right)+
  \frac{{\rm L}(\rho)-\epsilon}{\rho^2}\|\pi_{\scriptscriptstyle{XY}}\,\g\xi\|^2+\det A_{\scriptscriptstyle{XY}}\nonumber\\
    &\ge & \frac{\epsilon}{\rho^2}+\frac{{\rm L}(\rho)-\epsilon}{\rho^2}\|\pi_{\scriptscriptstyle{XY}}\,\g\xi\|^2, \nonumber
\end{eqnarray}
which implies that \,$\pi_{\scriptscriptstyle{XY}}\,\g\xi=0 \, \forall X, Y\in TM,$\, that is,
\,$\g\xi =0$\, on \,$M.$
\end{proof}

It is easily seen that Proposition \ref{prop-warpverticalsection}
applies to  the following type of hypersurfaces:

\begin{itemize}[parsep=1ex]
\item $f:M^n\rightarrow\R\times_{e^{-t^2/2}}\R^n$\,  with\,  $K\le 0$.
\item $f:M^n\rightarrow\R\times_{\cosh (t/2)}\h^n$\, with\, $K\le -1.$
\end{itemize}

\vt
\vt
\noindent
{\bf Acknowledgments.} We are indebted to Fernando Manfio and Ruy Tojeiro for  valuable
suggestions which improved some results in this paper. We would also like
to thank  Luis Florit for helpful conversations.

\end{document}